\numberwithin{equation}{section}
\begin{document}

\newtheorem{thm}{Theorem}[section]
\newtheorem{cor}[thm]{Corollary}
\newtheorem{prop}[thm]{Proposition}
\newtheorem{conj}[thm]{Conjecture}
\newtheorem{lem}[thm]{Lemma}
\newtheorem{Def}[thm]{Definition}
\newtheorem{rem}[thm]{Remark}
\newtheorem{prob}[thm]{Problem}
\newtheorem{ex}{Example}[section]

\newcommand{\be}{\begin{equation}}
\newcommand{\ee}{\end{equation}}
\newcommand{\ben}{\begin{enumerate}}
\newcommand{\een}{\end{enumerate}}
\newcommand{\beq}{\begin{eqnarray}}
\newcommand{\eeq}{\end{eqnarray}}
\newcommand{\beqn}{\begin{eqnarray*}}
\newcommand{\eeqn}{\end{eqnarray*}}
\newcommand{\bei}{\begin{itemize}}
\newcommand{\eei}{\end{itemize}}

\makeatletter
\newcommand{\rmnum}[1]{\romannumeral #1}
\newcommand{\Rmnum}[1]{\expandafter\@slowromancap\romannumeral #1@}
\makeatother

\newcommand{\pa}{{\partial}}
\newcommand{\V}{{\rm V}}
\newcommand{\R}{{\bf R}}
\newcommand{\K}{{\rm K}}
\newcommand{\e}{{\epsilon}}
\newcommand{\tomega}{\tilde{\omega}}
\newcommand{\tOmega}{\tilde{Omega}}
\newcommand{\tR}{\tilde{R}}
\newcommand{\tB}{\tilde{B}}
\newcommand{\tGamma}{\tilde{\Gamma}}
\newcommand{\fa}{f_{\alpha}}
\newcommand{\fb}{f_{\beta}}
\newcommand{\faa}{f_{\alpha\alpha}}
\newcommand{\faaa}{f_{\alpha\alpha\alpha}}
\newcommand{\fab}{f_{\alpha\beta}}
\newcommand{\fabb}{f_{\alpha\beta\beta}}
\newcommand{\fbb}{f_{\beta\beta}}
\newcommand{\fbbb}{f_{\beta\beta\beta}}
\newcommand{\faab}{f_{\alpha\alpha\beta}}

\newcommand{\pxi}{ {\pa \over \pa x^i}}
\newcommand{\pxj}{ {\pa \over \pa x^j}}
\newcommand{\pxk}{ {\pa \over \pa x^k}}
\newcommand{\pyi}{ {\pa \over \pa y^i}}
\newcommand{\pyj}{ {\pa \over \pa y^j}}
\newcommand{\pyk}{ {\pa \over \pa y^k}}
\newcommand{\dxi}{{\delta \over \delta x^i}}
\newcommand{\dxj}{{\delta \over \delta x^j}}
\newcommand{\dxk}{{\delta \over \delta x^k}}

\newcommand{\px}{{\pa \over \pa x}}
\newcommand{\py}{{\pa \over \pa y}}
\newcommand{\pt}{{\pa \over \pa t}}
\newcommand{\ps}{{\pa \over \pa s}}
\newcommand{\pvi}{{\pa \over \pa v^i}}
\newcommand{\ty}{\tilde{y}}
\newcommand{\bGamma}{\bar{\Gamma}}

\title { $(p, q)$-Sobolev inequality and Nash inequality on forward complete Finsler metric measure manifolds \footnote{The first author is supported by the National Natural Science Foundation of China (12371051, 12141101, 11871126).}}
\author{ Xinyue Cheng  and Qihui Ni }
\date{}

\maketitle

\begin{abstract}
 In this paper, we carry out in-depth research centering around the $(p, q)$-Sobolev inequality and Nash inequality on forward complete Finsler metric measure manifolds under the condition that ${\rm Ric}_{\infty} \geq -K$ for some $K \geq 0$.  We first obtain a global $p$-Poincar\'{e} inequality on such Finsler manifolds. Based on this, we can derive a $(p, q)$-Sobolev inequality. Furthermore, we establish a global optimal $(p, q)$-Sobolev inequality with a sharp Sobolev constant. Finally, as an application of the $p$-Poincar\'{e} inequality, we prove a Nash inequality.\\
{\bf Keywords:} Finsler metric measure manifold; weighted Ricci curvature; volume comparison theorem; Poincar\'{e} inequality; Sobolev inequality; Nash inequality\\
{\bf Mathematics Subject Classification:} 53C60,  53B40, 53C21
\end{abstract}

\maketitle

\section{Introduction}

In $n$-dimensional Euclidean space $\mathbb{R}^{n}$, there is an interesting question as follows: how can one control the size of a function in terms of the size of its gradient? The well-known Sobolev inequalities answer precisely this question. On the real line, the answer is given by a simple yet extremely useful calculus inequality. Namely, for any smooth function $f$ with compact support on the line,
$$
|f(t)| \leq \frac{1}{2} \int_{-\infty}^{+\infty}\left|f^{\prime}(s)\right| ds.
$$
The factor $1/2$ in this inequality comes from the fact that $f$ vanishes at both $+\infty$ and $-\infty$. It is natural to wonder if there is such an inequality for smooth compactly supported functions in higher-dimensional Euclidean spaces. Fortunately, we can prove the following result.  Fix an integer $n \geq 2$ and a real $p, \ 1 \leq p<n$ and set $q= np /(n-p)$. Then there exists a constant $C=C(n, p)$ such that, for any $f \in \mathcal{C}_{0}^{\infty}\left(\mathbb{R}^n\right)$,
\[
\left(\int_{\mathbb{R}^n}|f(x)|^q dx\right)^{1/q}\leq C \left(\int_{\mathbb{R}^n}|\nabla f(x)|^{p} dx\right)^{1/p},
\]
where $\nabla f$ is the gradient of $f$ and $\mathcal{C}_{0}^{\infty}\left(\mathbb{R}^n\right)$ denotes the set of all smooth compactly supported functions in $\mathbb{R}^n$ (\cite{SC}).

In Riemannian setting, the study of Sobolev spaces on Riemannian manifolds is a very important field which has been undergoing great development. Particularly, Sobolev inequalities are very useful when developing analysis on Riemannian manifolds, even more so than on Euclidean space, because other tools such as Fourier analysis are not available any more. This is particularly true when one studies large scale behavior of solutions of partial differential equations such as the Laplace and heat equations. In \cite{SC}, Saloff-Coste shows that Poincar\'{e} inequality and the doubling property of the measure imply a family of local Sobolev inequalities. As applications of Sobolev inequalities, he studies Gaussian heat kernel estimates and the Rozenblum-Lieb-Cwikel inequality and derives elliptic Harnack inequality and parabolic Harnack inequalities. On the other hand, Hebey discusses best Sobolev constants problems for compact Riemannian manifolds and complete noncompact Riemannian respectively in \cite{Heby}.

In \cite{nash}, Nash introduced the Nash inequality on Riemannian manifolds and proved that a Nash inequality implies that the heat diffusion semigroup $(H_{t})_{t>0}$ is ultracontractive. Later,  Nash type inequality has been widely studied and applied (e.g. see \cite{BM,SC}).

Finsler geometry is just Riemannian geometry without the quadratic restriction (\cite{Chern}). It is natural to study and develop Sobolev inequalities and the relevant applications on Finsler metric measure manifolds. In \cite{Ohta1}, Ohta establishes a logarithmic Sobolev inequality and a sharp Sobolev inequality on compact Finsler metric measure manifolds with ${\rm Ric}_N \geq K$ for some $K>0$ and $N \in (-\infty , -2)\cup [n, \infty)$ (also see \cite{Ohta2}). Further, Xia proves the existence of two types of optimal $(p, q)$-Sobolev inequalities on compact Finsler manifolds with ${\rm Ric}_{N} \geq K$ for $N \in[n, \infty)$ and $K \in \mathbb{R}$. In particular, when $K>0$, Xia establishes the sharp $(p,2)$-Sobolev inequality for $2 \leq p \leq 2N /(N-2)$ (see \cite{Xia}).  Recently, the first author and Feng establish two local uniform Sobolev inequalities on forward  complete Finsler metric measure spaces with weighted Ricci curvature ${\rm Ric}_{\infty}$ bounded below (\cite{ChengF1,ChengF2}). On the other hand, Ohta shows  a Nash inequality as an important foundation to derive a non-sharp Sobolev inequality under the condition that ${\rm Ric}_{N} \geq K$ for some $K>0$ and $N \in[n, \infty)$ in \cite{Ohta1} (also see \cite{Ohta2}).

In this paper, we mainly study the existence of global optimal $(p, q)$-Sobolev inequality and Nash inequality on forward complete Finsler metric measure manifolds under the condition that ${\rm Ric}_{\infty} \geq -K$ for some $K \geq 0$. For convenience to introduce our main result, we first give some necessary notations.  We always use $(M, F, m)$ to denote a Finsler manifold $(M, F)$ equipped with a smooth measure $m$ which we call a Finsler metric measure manifold (or Finsler measure space briefly). A Finsler measure space is not a metric space in usual sense because Finsler metric $F$ may be nonreversible, that is, $F(x, y)\neq F(x, -y)$ may happen. This non-reversibility causes the asymmetry of the associated distance function. In order to overcome the deficiencies that a Finsler metric $F$ may be nonreversible, Rademacher defined the reversibility $\Lambda$ of $F$ by
\be
\Lambda:=\sup _{(x, y) \in TM \backslash\{0\}} \frac{F(x,y)}{F(x, -y)}.
\ee
Obviously, $\Lambda \in [1, \infty]$ and $\Lambda=1$ if and only if $F$ is reversible (\cite{Ra}). For $x_1, x_2 \in M$, the distance from $x_1$ to $x_2$ is defined by
\be
d_{F}(x_{1}, x_{2}):=\inf _{\gamma} \int_{0}^{1} F(\gamma (t), \dot{\gamma}(t)) d t,
\ee
where the infimum is taken over all $C^1$ curves $\gamma:[0,1] \rightarrow M$ such that $\gamma(0)=$ $x_1$ and $\gamma(1)=x_2$. Note that $d_{F} \left(x_1, x_2\right) \neq d_{F} \left(x_2, x_1\right)$ unless $F$ is reversible. The diameter of $M$ is defined by
\be
{\rm Diam}(M):=\sup _{x_{1}, x_{2} \in M} \{d_{F}(x_{1}, x_{2})\}.
\ee
Further, let ${\bf S}={\bf S}(x, y)$ be the $S$-curvature of $F$ and
\be
\vartheta := \sup\limits_{(x,y)\in TM\setminus \{0\}}\frac{|{\bf S}(x,y)|}{F(x,y)}. \label{supS}
\ee
For more details, see Section \ref{Introd}.

Our first main result is the following theorem.

\begin{thm}\label{sob5}
Let $(M, F, m)$ be an $n$-dimensional $(n \geq 2)$ forward complete Finsler manifold with finite reversibility $\Lambda$. Assume that ${\rm Ric}_{\infty} \geq -K$ for some $K \geq 0$, $d:= {\rm Diam}(M)< \infty$ and $m_{0}:= m(M)> 0$. Then, for any $\nu > n+1$ and $q\in [1, 2]$, there exists a positive constant $A =A( \Lambda, n, K, m_{0}, d, \vartheta)$ such that for any $u \in W^{1,q}(M)$, the following optimal inequality holds.
\begin{equation}
\left(\int_{M} | u |^{p} \, dm\right)^{\frac{q}{p}} \leq m_{0}^{-\frac{q}{\nu}} \int_{M} | u |^{q} \, dm + A \int_{M} [F^{*}(du)]^{q} \, dm , \label{sob6}
\end{equation}
where $\frac{1}{p} = \frac{1}{q}-\frac{1}{\nu}$.
\end{thm}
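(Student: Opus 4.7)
The plan is to prove Theorem 1.1 in three stages: first reduce to mean-zero functions via the global $p$-Poincar\'e inequality; then bootstrap the Poincar\'e estimate into a Sobolev--Poincar\'e inequality via Moser-type iteration; and finally reassemble $u$ from its average and oscillation so that the coefficient in front of $m_0^{-q/\nu}\int_M |u|^q\,dm$ is exactly~$1$.

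For the first two stages I would invoke the global Poincar\'e inequality announced as the paper's first main result, with $p = q$:
$$\int_M |u - u_M|^q \, dm \leq C_P \int_M [F^*(du)]^q \, dm, \qquad u_M := m_0^{-1}\int_M u \, dm,$$
with $C_P = C_P(\Lambda, n, K, m_0, d, \vartheta)$. To raise the exponent on the left I would substitute power functions $|u - u_M|^{\alpha}\,\mathrm{sgn}(u-u_M)$ back into Poincar\'e, apply the chain rule (tracking $\Lambda$, because $F^*$ is non-reversible and powers of $u - u_M$ change sign), and iterate via H\"older's inequality in the style of Saloff-Coste \cite{SC}. The weighted Bishop--Gromov comparison available under $\mathrm{Ric}_\infty \geq -K$ is strictly weaker than its $\mathrm{Ric}_N$ analogue and yields an effective dimension exceeding $n$; this supplies the volume doubling required for the iteration and is exactly the source of the restriction $\nu > n+1$. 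After finitely many iteration steps I arrive at the Sobolev--Poincar\'e estimate
$$\left(\int_M |u - u_M|^p \, dm\right)^{q/p} \leq A_1 \int_M [F^*(du)]^q \, dm, \qquad \tfrac{1}{p} = \tfrac{1}{q} - \tfrac{1}{\nu}.$$

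For the third stage I would pass from $\|u - u_M\|_p^q$ to $\|u\|_p^q$ while aiming for coefficient exactly~$1$ in front of $m_0^{-q/\nu}\int_M |u|^q\,dm$. The naive decomposition $u = (u - u_M) + u_M$ combined with Minkowski in $L^p$ and a Young-type expansion of $(a+b)^q$ for $q \in [1,2]$ yields $\|u\|_p^q \leq (1+\varepsilon)|u_M|^q m_0^{q/p} + C_\varepsilon \|u - u_M\|_p^q$; Jensen's inequality then converts $|u_M|^q m_0^{q/p}$ into $m_0^{-q/\nu}\int_M |u|^q\,dm$, with equality precisely on constants (which is the origin of the sharpness). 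Removing the residual factor $1+\varepsilon$ is the main obstacle: the natural route is to work directly with the nonnegative defect functional
$$D(u) := \|u\|_p^q - m_0^{-q/\nu}\|u\|_q^q \geq 0$$
(nonnegativity is H\"older, with equality on constants) and to establish $D(u) \leq A \int_M [F^*(du)]^q \, dm$ by a first-order reduction to the Sobolev--Poincar\'e estimate of stage two. The assumption $q \in [1,2]$ is used precisely here, so that the behaviour of $D$ near constants is controlled by the $q$-th power of the gradient energy.
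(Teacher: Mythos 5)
Your architecture is sound: obtain a Sobolev--Poincar\'e bound $\|u-\bar u\|_{L^p}^q\le A_1\|F^*(du)\|_{L^q}^q$ from the global Poincar\'e inequality and doubling, then reassemble $u$ from its mean and its oscillation. You also correctly identify the crux of Theorem~\ref{sob5}: Minkowski plus Young gives $\|u\|_{L^p}^q\le(1+\varepsilon)m_0^{q/p}|\bar u|^q+C_\varepsilon\|u-\bar u\|_{L^p}^q$ and then H\"older converts $m_0^{q/p}|\bar u|^q$ into $m_0^{-q/\nu}\|u\|_{L^q}^q$, so the sole obstacle is stripping the spurious factor $1+\varepsilon$.

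But your resolution of that obstacle is circular. Defining $D(u):=\|u\|_{L^p}^q-m_0^{-q/\nu}\|u\|_{L^q}^q\ge 0$ and declaring that you will ``establish $D(u)\le A\int_M[F^*(du)]^q\,dm$ by a first-order reduction to the Sobolev--Poincar\'e estimate'' is a restatement of the target inequality, not a proof of it; nothing in the proposal explains what the reduction is, and the remark that $q\in[1,2]$ ``controls the behaviour of $D$ near constants'' is a heuristic, not a step. What is actually needed --- and what the paper supplies --- is a sharp \emph{algebraic} inequality with coefficient exactly $1$ on the mean term, proved independently of any Sobolev machinery: for $p\ge 2$,
\[
\Bigl(\int_M|u|^p\,dm\Bigr)^{2/p}\le m_0^{-\frac{2(p-1)}{p}}\Bigl(\int_M u\,dm\Bigr)^2+(p-1)\Bigl(\int_M|u-\bar u|^p\,dm\Bigr)^{2/p},
\]
established by writing $u=1+tv$ with $\int_Mv\,dm=0$, $\int_M v^2\,dm=1$ and bounding $\varphi''(t)$ for $\varphi(t)=\bigl(\int_M|1+tv|^p\bigr)^{2/p}$ using H\"older and $p\ge 2$; and for $1<p<2$ an analogous inequality with constant $(1+p(p-1)^{p-1})^{q/p}$ on the oscillation term, obtained by splitting $M$ into $\{v\ge 0\}$, $\{-1\le v<0\}$, $\{v<-1\}$, applying elementary pointwise bounds on $(1\pm x)^p$, and optimizing an auxiliary function of $m(\{v\ge -1\})$. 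Raising to the power $q/2\le 1$ (respectively $q/p\le 1$), using subadditivity of $t\mapsto t^{q/2}$, and then H\"older as you describe closes the argument. Without such an explicit structural lemma --- and without noticing that it requires a case split on $p\gtrless 2$ rather than an argument organized around $q$ --- your stage three does not go through. (Also a small point of description: the paper passes from Poincar\'e to Sobolev--Poincar\'e via Xia's $W^{1,1}\hookrightarrow L^{\nu/(\nu-1)}$ lemma combined with doubling, not by Moser iteration, though the two routes are cousins.)
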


An interesting problem is as follows: how to get the best value of Sobolev constant $A$ in (\ref{sob6})? This is an open problem under our assumptions.

Our second main result is the following Nash inequality.

\begin{thm}{\label{nashs}}
Let $(M, F, m)$ be an $n$-dimensional forward complete Finsler manifold with finite reversibility $\Lambda$. Assume that ${\rm Ric}_{\infty} \geq -K$ for some $K \geq 0$, $d := {\rm Diam}(M)< \infty$ and $m_{0}=m(M)>0$. Let  $p \geq 1$. Then, for any $u\in W^{1,p}(M)$, there exist a positive constant $D=D(n, K, \vartheta, \Lambda, d)$ such that
\begin{equation}
\|u\|_{L^{p}}^{2+\frac{2}{n+1}} \leq D(n, K, \vartheta, \Lambda, d)m_{0}^{\frac{2(1-p)}{(n+1)p}}\left(\|F(\nabla u)\|_{L^{p}}^{2}+\|u\|_{L^{p}}^{2}\right)\|u\|_{L^{1}}^{\frac{2}{n+1}}. \label{nash1}
\end{equation}
\end{thm}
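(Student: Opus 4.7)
The plan is to deduce (\ref{nash1}) from the optimal $(p,q)$-Sobolev inequality of Theorem~\ref{sob5} by a single H\"older interpolation, with the auxiliary parameter $\nu$ chosen so that the Nash exponent $2/(n+1)$ appears automatically. The case $p=1$ is trivial: after cancelling $\|u\|_{L^1}^{2/(n+1)}$ from both sides, (\ref{nash1}) reduces to $\|u\|_{L^1}^{2} \leq D\bigl(\|F(\nabla u)\|_{L^1}^{2}+\|u\|_{L^1}^{2}\bigr)$, which holds with $D=1$. So the real work is for $p\in(1,2]$.

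First I would apply Theorem~\ref{sob5} with $q=p$ and the specific choice $\nu := p(n+1)/(p-1)$; note that $\nu>n+1$ is automatic from $p>1$. A direct computation shows that the associated Sobolev conjugate $r$, defined by $1/r=1/p-1/\nu$, satisfies $1/r=(n+2-p)/[p(n+1)]$, so (\ref{sob6}) reads
\[
\|u\|_{L^r}^{p} \;\leq\; m_{0}^{-p/\nu}\,\|u\|_{L^p}^{p} \;+\; A\,\|F^{*}(du)\|_{L^p}^{p}.
\]
Independently, the same value of $r$ forces $\theta=(n+1)/(n+2)$ in the interpolation $\|u\|_{L^p}\leq \|u\|_{L^1}^{1-\theta}\|u\|_{L^r}^{\theta}$, and raising this to the power $2(n+2)/(n+1)=2+2/(n+1)$ produces
\[
\|u\|_{L^p}^{2+2/(n+1)} \;\leq\; \|u\|_{L^1}^{2/(n+1)}\,\|u\|_{L^r}^{2}.
\]

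It then remains to raise the Sobolev display to the power $2/p\geq 1$ and apply the elementary bound $(a+b)^{2/p}\leq 2^{2/p-1}(a^{2/p}+b^{2/p})$; this yields a two-term estimate on $\|u\|_{L^r}^{2}$ which, substituted into the preceding display, gives exactly (\ref{nash1}), with $D=2^{2/p-1}\max\bigl(1,\,A^{2/p}m_{0}^{2/\nu}\bigr)$ and with $m_{0}$-exponent $-2/\nu=2(1-p)/[(n+1)p]$, as required, after identifying $F^{*}(du)=F(\nabla u)$. The main difficulty I anticipate is the bookkeeping of the constant: Theorem~\ref{sob5} produces $A=A(\Lambda,n,K,m_{0},d,\vartheta)$, whereas the conclusion lists $D$ as depending only on $(n,K,\vartheta,\Lambda,d)$. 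One therefore has to revisit the explicit dependence of $A$ on $m_{0}$ generated by the volume-comparison step in the proof of Theorem~\ref{sob5}, and verify that after combination with the factor $m_{0}^{2/\nu}$ the $m_{0}$-dependence either disappears or can be absorbed into $d$ and the curvature data. A secondary, milder obstacle is the extension to $p>2$, which lies outside the range $q\in[1,2]$ of Theorem~\ref{sob5}; the natural fix is a substitution $v=|u|^{p/2}$ and the chain rule $F^{*}(d|u|^{p/2})=(p/2)|u|^{p/2-1}F^{*}(du)$ to reduce the problem to the case $p=2$ already covered above.
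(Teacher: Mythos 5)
Your route through Theorem~\ref{sob5} is genuinely different from the paper's (the paper splits $u=(u-\bar u_r)+\bar u_r$ by Minkowski, bounds the oscillation by the global $p$-Poincar\'e inequality of Theorem~\ref{Poincare-ineq}, bounds $\|\bar u_r\|_\infty$ by the volume comparison (\ref{doubvol}), and then optimizes over the radius $r\in(0,d)$), and for $1<p\le 2$ your arithmetic is correct: with $q=p$, $\nu=p(n+1)/(p-1)$, $\frac1r=\frac1p-\frac1\nu$ and $\theta=\frac{n+1}{n+2}$, the interpolation $\|u\|_{L^p}\le\|u\|_{L^1}^{1-\theta}\|u\|_{L^r}^{\theta}$ combined with (\ref{sob6}) raised to the power $2/p$ does produce the Nash exponents, and the $p=1$ case is indeed trivial. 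But the proof as proposed does not establish the theorem, which is stated for \emph{all} $p\ge1$. Your fix for $p>2$ via $v=|u|^{p/2}$ fails: applying the $p=2$ Nash inequality to $v$, estimating $\|F(\nabla v)\|_{L^2}^2\le\frac{p^2}{4}\|u\|_{L^p}^{p-2}\|F(\nabla u)\|_{L^p}^2$ by H\"older and $\|v\|_{L^1}=\|u\|_{L^{p/2}}^{p/2}\le(\|u\|_{L^1}^{1/(p-1)}\|u\|_{L^p}^{(p-2)/(p-1)})^{p/2}$ by interpolation, one obtains, after cancelling powers of $\|u\|_{L^p}$, an inequality of the form $\|u\|_{L^p}^{2+\frac{p}{(n+1)(p-1)}}\le D'\,m_0^{-\frac{1}{n+1}}(\|F(\nabla u)\|_{L^p}^2+\|u\|_{L^p}^2)\,\|u\|_{L^1}^{\frac{p}{(n+1)(p-1)}}$. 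For $p>2$ the exponent $\frac{p}{(n+1)(p-1)}$ is strictly smaller than $\frac{2}{n+1}$ and the $m_0$-power $-\frac1{n+1}$ differs from $\frac{2(1-p)}{(n+1)p}$; since $\|u\|_{L^1}/\|u\|_{L^p}$ can be made arbitrarily small (concentrating $u$), this weaker inequality does not imply (\ref{nash1}). So the range $p>2$ is a genuine gap, whereas the paper's Poincar\'e-plus-volume-comparison argument covers every $p\ge1$ at once.

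The second issue you flag yourself — that $A=A(\Lambda,n,K,m_0,d,\vartheta)$ while $D$ must not depend on $m_0$ — is real but repairable, and you should carry the repair out rather than leave it as a hope: apply Theorem~\ref{sob5} to the normalized measure $\hat m=m/m_0$ (this changes none of $n,K,\vartheta,\Lambda,d$, since the S-curvature and ${\rm Ric}_\infty$ are unchanged under scaling of the measure) and rescale back; this shows one may take $A=m_0^{-q/\nu}\tilde A(\Lambda,n,K,d,\vartheta)$, so that $A^{2/p}m_0^{2/\nu}$ is $m_0$-free and your constant $D=2^{2/p-1}\max(1,A^{2/p}m_0^{2/\nu})$ has the claimed dependence. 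Note that the paper never needs this normalization trick, because in its proof the $m_0$-dependence enters only through the explicit factor $m_0^{\frac1p-1}$ in the bound for $\|\bar u_r\|_{L^p}$, which after optimizing $g(r)=H_1r+H_2r^{-(n+1)}$ yields exactly the power $m_0^{\frac{2(1-p)}{(n+1)p}}$ in (\ref{nash1}).
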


The paper is organized as follows. In Section \ref{Introd}, we give some necessary definitions and notations. Then we prove a global $p$-Poincar\'{e} inequality on forward complete Finsler metric measure manifolds under the condition that ${\rm Ric}_{\infty} \geq -K$ for some $K \geq 0$
in Section \ref{smean}. Next, starting from Lemma 5.3 in \cite{Xia} (see Lemma \ref{sobll}), we will give the proof of Theorem \ref{sob5} in Section \ref{Ssob}. Finally, we will prove Nash inequality (\ref{nash1}) by the global $p$-Poincar\'{e} inequality and the volume comparison in Section \ref{nash}.

\section{Preliminaries}\label{Introd}
In this section, we briefly review some necessary definitions, notations and  fundamental results in Finsler geometry. For more details, we refer to \cite{BaoChern, ChernShen, Ohta2, Shen1}.

Let $M$ be an $n$-dimensional smooth manifold. A Finsler metric on manifold $M$ is a function $F: T M \longrightarrow[0, \infty)$  satisfying the following properties: (1) $F$ is $C^{\infty}$ on $TM\backslash\{0\}$; (2) $F(x,\lambda y)=\lambda F(x,y)$ for any $(x,y)\in TM$ and all $\lambda >0$; (3)  $F$ is strongly convex, that is, the matrix $\left(g_{ij}(x,y)\right)=\left(\frac{1}{2}(F^{2})_{y^{i}y^{j}}\right)$ is positive definite for any nonzero $y\in T_{x}M$. The pair $(M,F)$ is called a Finsler manifold and $g:=g_{ij}(x,y)dx^{i}\otimes dx^{j}$ is called the fundamental tensor of $F$. A non-negative function on $T^{*}M$ with analogous properties is called a Finsler co-metric. For any Finsler metric $F$, its dual metric
\be
F^{*}(x, \xi):=\sup\limits_{y\in T_{x}M\setminus \{0\}} \frac{\xi (y)}{F(x,y)}, \ \ \forall \xi \in T^{*}_{x}M. \label{co-Finsler}
\ee
is a Finsler co-metric.

We define the reverse metric $\overleftarrow{F} $ of a Finsler metric $F$ by $\overleftarrow{F}(x, y):=F(x,-y)$ for all $(x, y) \in T M$. It is easy to see that $\overleftarrow{F}$ is also a Finsler metric on $M$. A Finsler metric $F$ on $M$ is said to be reversible if $\overleftarrow{F}(x, y)=F(x, y)$ for all $(x, y) \in T M$. Otherwise, we say $F$ is irreversible.

For a non-vanishing vector field $V$ on $M$, one introduces the weighted Riemannian metric $g_V$ on $M$ given by
\be
g_{V}(y, w)=g_{ij}(x, V_{x})y^{i} w^{j}  \label{weiRiem}
\ee
for $y,\, w\in T_{x}M$. In particular, $g_{V}(V,V)=F^{2}(x,V)$.

Let $x_{0}\in M$. The forward and backward geodesic balls of radius $R$ with center at $x_{0}$ are respectively defined by
$$
B_{R}^{+}(x_0):=\{x \in M \mid d_{F}(x_{0}, x)<R\},\qquad B_{R}^{-}(x_0):=\{x \in M \mid d_{F}(x, x_{0})<R\}.
$$
In the following, we will always denote $B_{R}:=B^{+}_{R}(x_0)$ for some $x_{0}\in M$ for simplicity.

Let $(M,F)$ be a Finsler manifold of dimension $n$. Let $\pi: TM \setminus \{0\} \rightarrow M$ be the projective map.  The pull-back $\pi ^{*}TM$ admits a unique linear connection, which is called the Chern connection. The Chern connection $D$ is determined by the following equations
\beq
&& D^{V}_{X}Y-D^{V}_{Y}X=[X,Y], \label{chern1}\\
&& Zg_{V}(X,Y)=g_{V}(D^{V}_{Z}X,Y)+g_{V}(X,D^{V}_{Z}Y)+ 2C_{V}(D^{V}_{Z}V,X,Y) \label{chern2}
\eeq
for $V\in TM\setminus \{0\}$  and $X, Y, Z \in TM$, where
$$
C_{V}(X,Y,Z):=C_{ijk}(x,V)X^{i}Y^{j}Z^{k}=\frac{1}{4}\frac{\pa ^{3}F^{2}(x,V)}{\pa V^{i}\pa V^{j}\pa V^{k}}X^{i}Y^{j}Z^{k}
$$
is the Cartan tensor of $F$ and $D^{V}_{X}Y$ is the covariant derivative with respect to the reference vector $V$.

Given a non-vanishing vector field $V$ on $M$,  the Riemannian curvature  $R^V$ is defined by
$$
R^V(X, Y) Z=D_X^V D_Y^V Z-D_Y^V D_X^V Z-D_{[X, Y]}^V Z
$$
for any vector fields $X$, $Y$, $Z$ on $M$. For two linearly independent vectors $V, W \in T_x M \backslash\{0\}$, the flag curvature is defined by
$$
\mathcal{K}^V(V, W)=\frac{g_V\left(R^V(V, W) W, V\right)}{g_V(V, V) g_V(W, W)-g_V(V, W)^2}.
$$
Then the Ricci curvature is defined as
$$
\operatorname{Ric}(V):=F(x, V)^{2} \sum_{i=1}^{n-1} \mathcal{K}^V\left(V, e_i\right),
$$
where $e_1, \ldots, e_{n-1}, \frac{V}{F(V)}$ form an orthonormal basis of $T_x M$ with respect to $g_V$.

A $C^{\infty}$-curve $\gamma:[0,1] \rightarrow M$ is called a geodesic  if $F(\gamma, \dot{\gamma})$ is constant and it is locally minimizing. The exponential map $\exp _{x}: T_{x} M \rightarrow M$ is defined by $\exp _x(v)=\gamma(1)$ for $v \in T_x M$ if there is a geodesic $\gamma:[0,1] \rightarrow M$ with $\gamma(0)=x$ and $\dot{\gamma}(0)=v$. A Finsler manifold $(M, F)$ is said to be forward complete (resp. backward complete) if each geodesic defined on $[0, \ell)$ (resp. $(-\ell, 0])$ can be extended to a geodesic defined on $[0, \infty)$ (resp. $(-\infty, 0])$. We say $(M, F)$ is complete if it is both forward complete and backward complete. By Hopf-Rinow theorem on forward complete Finsler manifolds, any two points in $M$ can be connected by a minimal forward geodesic and the forward closed balls $\overline{B_R^{+}(p)}$ are compact (see \cite{BaoChern, Shen1}).

Let $(M, F, m)$ be an $n$-dimensional Finsler manifold with a smooth measure $m$. Write the volume form $dm$ of  $m$ as $d m = \sigma(x) dx^{1} dx^{2} \cdots d x^{n}$. Define
\be\label{Dis}
\tau (x, y):=\ln \frac{\sqrt{{\rm det}\left(g_{i j}(x, y)\right)}}{\sigma(x)}.
\ee
We call $\tau$ the distortion of $F$. It is natural to study the rate of change of the distortion along geodesics. For a vector $y \in T_{x} M \backslash\{0\}$, let $\sigma=\sigma(t)$ be the geodesic with $\sigma(0)=x$ and $\dot{\sigma}(0)=y.$  Set
\be
{\bf S}(x, y):= \frac{d}{d t}\left[\tau(\sigma(t), \dot{\sigma}(t))\right]|_{t=0}.
\ee
$\mathbf{S}$ is called the S-curvature of $F$ (\cite{ChernShen, shen}).

Let $Y$ be a $C^{\infty}$ geodesic field on an open subset $U \subset M$ and $\hat{g}=g_{Y}.$  Let
\be
d m:=e^{- \psi} {\rm Vol}_{\hat{g}}, \ \ \ {\rm Vol}_{\hat{g}}= \sqrt{{det}\left(g_{i j}\left(x, Y_{x}\right)\right)}dx^{1} \cdots dx^{n}. \label{voldecom}
\ee
It is easy to see that $\psi$ is given by
$$
\psi (x)= \ln \frac{\sqrt{\operatorname{det}\left(g_{i j}\left(x, Y_{x}\right)\right)}}{\sigma(x)}=\tau\left(x, Y_{x}\right),
$$
which is just the distortion of $F$ along $Y_{x}$ at $x\in M$ (\cite{ChernShen, Shen1}). Let $y := Y_{x}\in T_{x}M$ (that is, $Y$ is a geodesic extension of $y\in T_{x}M$). Then, by the definitions of the S-curvature, we have
\beqn
&&  {\bf S}(x, y)= Y[\tau(x, Y)]|_{x} = d \psi (y),  \\
&&  \dot{\bf S}(x, y)= Y[{\bf S}(x, Y)]|_{x} =y[Y(\psi)],
\eeqn
where $\dot{\bf S}(x, y):={\bf S}_{|m}(x, y)y^{m}$ and ``$|$" denotes the horizontal covariant derivative with respect to the Chern connection  (\cite{shen, Shen1}). Further, the weighted Ricci curvatures are defined as follows (\cite{ChSh,Ohta2})
\beq
{\rm Ric}_{N}(y)&=& {\rm Ric}(y)+ \dot{\bf S}(x, y) -\frac{{\bf S}(x, y)^{2}}{N-n},   \label{weRicci3}\\
{\rm Ric}_{\infty}(y)&=& {\rm Ric}(y)+ \dot{\bf S}(x, y). \label{weRicciinf}
\eeq
We say that Ric$_N\geq K$ for some $K\in \mathbb{R}$ if Ric$_N(v)\geq KF^2(v)$ for all $v\in TM$, where $N\in \mathbb{R}\setminus \{n\}$ or $N= \infty$.

\vskip 3mm

According to Lemma 3.1.1 in \cite{Shen1}, for any vector $y\in T_{x}M\setminus \{0\}$, $x\in M$, the covector $\xi =g_{y}(y, \cdot)\in T^{*}_{x}M$ satisfies
\be
F(x,y)=F^{*}(x, \xi)=\frac{\xi (y)}{F(x,y)}. \label{shenF311}
\ee
Conversely, for any covector $\xi \in T_{x}^{*}M\setminus \{0\}$, there exists a unique vector $y\in T_{x}M\setminus \{0\}$ such that $\xi =g_{y}(y, \cdot)\in T^{*}_{x}M$ . Naturally,  we define a map ${\cal L}: TM \rightarrow T^{*}M$ by
$$
{\cal L}(y):=\left\{
\begin{array}{ll}
g_{y}(y, \cdot), & y\neq 0, \\
0, & y=0.
\end{array} \right.
$$
It follows from (\ref{shenF311}) that
\be
F(x,y)=F^{*}(x, {\cal L}(y)). \label{preserN}
\ee
Thus ${\cal L}$ is a norm-preserving transformation. We call ${\cal L}$ the Legendre transformation on Finsler manifold $(M, F)$.
Let
$$
g^{*kl}(x,\xi):=\frac{1}{2}\left[F^{*2}\right]_{\xi _{k}\xi_{l}}(x,\xi).
$$
For any $\xi ={\cal L}(y)$, we have
\be
g^{*kl}(x,\xi)=g^{kl}(x,y), \label{Fdual}
\ee
where $\left(g^{kl}(x,y)\right)= \left(g_{kl}(x,y)\right)^{-1}$.

Given a smooth function $u$ on $M$, the differential $d u_x$ at any point $x \in M$,
$$
d u_x=\frac{\partial u}{\partial x^i}(x) d x^i
$$
is a linear function on $T_x M$. We define the gradient vector $\nabla u(x)$ of $u$ at $x \in M$ by $\nabla u(x):=\mathcal{L}^{-1}(d u(x)) \in T_x M$. In a local coordinate system, we can express $\nabla u$ as
\be \label{nabna}
\nabla u(x)= \begin{cases}g^{* i j}(x, d u) \frac{\partial u}{\partial x^i} \frac{\partial}{\partial x^j}, & x \in M_u, \\ 0, & x \in M \backslash M_u,\end{cases}
\ee
where $M_{u}:=\{x \in M \mid d u(x) \neq 0\}$ \cite{Shen1}. In general, $\nabla u$ is only continuous on $M$, but smooth on $M_{u}$. By (\ref{preserN}), $F(x, \nabla u)=F^{*}(x, du)$.

\vskip 3mm

Let $W^{1, p}(M)(p>1)$ be the space of functions $u \in L^{p}(M)$ with $\int_{M}[F(\nabla u)]^{p} d m + \int_{M}\big[\overleftarrow{F}(\overleftarrow{\nabla} u)\big]^{p} d m<\infty$ and $W_{0}^{1, p}(M)$ be the closure of $\mathcal{C}_0^{\infty}(M)$ under the (absolutely homogeneous) norm
\be
\|u\|_{W^{1, p}(M)}:=\|u\|_{L^p(M)}+\frac{1}{2}\|F(\nabla u)\|_{L^p(M)}+\frac{1}{2}\|\overleftarrow{F}(\overleftarrow{\nabla} u)\|_{L^p(M)},
\ee
where $\mathcal{C}_{0}^{\infty}(M)$ denotes the set of all smooth compactly supported functions on $M$ and $\overleftarrow{\nabla} u$ is the gradient of $u$ with respect to the reverse metric $\overleftarrow{F}$. In fact, $\overleftarrow{F}(\overleftarrow{\nabla} u)=F(\nabla(-u))$.

\section{A global $p$-Poincar\'{e} inequality}\label{smean}

In this section, we will prove a global $p$-Poincar\'{e} inequality starting from the following volume comparison result.

\begin{lem}{\rm(\cite{ChengF2})}\label{volume}
Let $(M, F, m)$ be an $n$-dimensional forward complete Finsler metric measure space. Assume that ${\rm Ric}_{\infty} \geq -K$ for some $K \geq 0$. Then, along any minizing geodesic starting from the center $x_{0}$ of $B^{+}_{R}(x_{0})$, we have the following for any $0 < r_{1}<r_{2} <R$
\begin{equation}
\frac{m(B_{r_{2}}(x_{0}))}{m(B_{r_{1}}(x_{0}))} \leq \left(\frac{r_{2}}{r_{1}} \right)^{n+1}e^{\frac{K+ \vartheta^{2}}{6}(r^{2}_{2}-r^{2}_{1})}, \label{volcomp}
\end{equation}
where $\vartheta$ is defined by (\ref{supS}).
\end{lem}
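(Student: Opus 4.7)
The plan is to reduce the ${\rm Ric}_{\infty}$ hypothesis to an effective finite-$N$ curvature bound via the $S$-curvature control, apply a Bishop--Gromov-type comparison in the Finsler setting, and convert the resulting $\sinh$-model volume ratio into the polynomial-times-Gaussian form stated. The reduction uses the algebraic identity ${\rm Ric}_{N} = {\rm Ric}_{\infty} - {\bf S}^{2}/(N-n)$ for $N > n$: combining ${\rm Ric}_{\infty}(y) \geq -K F(y)^{2}$ with $|{\bf S}(x,y)| \leq \vartheta\,F(x,y)$ gives ${\rm Ric}_{N}(y) \geq -\bigl(K + \vartheta^{2}/(N-n)\bigr)F(y)^{2}$, and the choice $N = n+1$ yields the effective bound ${\rm Ric}_{n+1}(y) \geq -(K+\vartheta^{2})F(y)^{2}$. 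This is the only place the ${\rm Ric}_\infty$ hypothesis is used.

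Next, I would invoke the mean-curvature comparison for ${\rm Ric}_{N} \geq -(N-1)\kappa$ in the Finsler setting (see Ohta \cite{Ohta2}). Along any minimizing unit-speed geodesic $\gamma_{v}(t) = \exp_{x_{0}}(tv)$, the logarithmic derivative $h(t)$ of the $m$-volume density satisfies the Riccati inequality $h'(t) + h(t)^{2}/n \leq K + \vartheta^{2}$, with initial asymptotics $h(t) \sim n/t$. Comparison with the model ODE gives $h(t) \leq n\alpha\coth(\alpha t)$, $\alpha := \sqrt{(K+\vartheta^{2})/n}$, and integrating and averaging over the unit tangent sphere at $x_{0}$ produces
\[
\frac{m(B_{r_{2}}(x_{0}))}{m(B_{r_{1}}(x_{0}))} \;\leq\; \frac{V_{\kappa}^{n+1}(r_{2})}{V_{\kappa}^{n+1}(r_{1})},\qquad V_{\kappa}^{n+1}(r) := \omega_{n+1}\int_{0}^{r}\!\Big(\tfrac{\sinh(\alpha t)}{\alpha}\Big)^{n}dt,
\]
with $\kappa = (K+\vartheta^{2})/n$.

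Finally, I would estimate this model ratio using elementary hyperbolic inequalities. The key ingredient is $\sinh(x)/x \leq e^{x^{2}/6}$ for $x \geq 0$, immediate from a Taylor coefficient comparison; this gives $(\sinh(\alpha t)/\alpha)^{n} \leq t^{n} e^{(K+\vartheta^{2})t^{2}/6}$. A short computation then shows that the auxiliary function $r \mapsto V_{\kappa}^{n+1}(r)\,r^{-(n+1)}\,e^{-(K+\vartheta^{2})r^{2}/6}$ is non-increasing, producing exactly
\[
\frac{V_{\kappa}^{n+1}(r_{2})}{V_{\kappa}^{n+1}(r_{1})} \;\leq\; \Big(\frac{r_{2}}{r_{1}}\Big)^{n+1} e^{(K+\vartheta^{2})(r_{2}^{2}-r_{1}^{2})/6}.
\]
I expect the last monotonicity to be the main obstacle: it amounts to showing $\phi(R) \leq (n+1)/R + nR/3$, where $\phi(R) := \sinh^{n}(R)\big/\!\int_{0}^{R}\!\sinh^{n}(u)\,du$ satisfies the Riccati equation $\phi' = \phi(n\coth R - \phi)$, which I would verify by exhibiting $g(R) = (n+1)/R + nR/3$ as a supersolution with matching asymptotics as $R \to 0^{+}$, so that a sub/supersolution comparison forces $\phi \leq g$ throughout $(0,R)$.
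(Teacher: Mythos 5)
Your proposal is a complete, correct proof of a lemma that the present paper does not actually prove: it cites \cite{ChengF2} and moves on, so there is no ``paper's own proof'' to compare against. That said, the route you chose is almost certainly the one taken in \cite{ChengF2}, because the exact shape of the bound --- exponent $n+1$ on the polynomial factor and $\tfrac{K+\vartheta^2}{6}$ in the Gaussian --- is precisely what falls out of the reduction ${\rm Ric}_{n+1} = {\rm Ric}_\infty - {\bf S}^2 \geq -(K+\vartheta^2)F^2$ followed by the finite-$N$ Bishop--Gromov comparison and the elementary bound $\sinh x / x \leq e^{x^2/6}$. I checked the two steps you flagged as potentially delicate and both hold. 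First, $\sinh x/x \leq e^{x^2/6}$ is correct term-by-term: it reduces to $(k+1)(k+2)\cdots(2k+1) \geq 6^k$, which holds with equality at $k=0,1$ and strictly thereafter. Second, the monotonicity of $r\mapsto V(r)\,r^{-(n+1)}e^{-(K+\vartheta^2)r^2/6}$ reduces (after rescaling) to $\phi(R) := \sinh^n(R)/\!\int_0^R\sinh^n \leq (n+1)/R + nR/3 =: g(R)$. Your supersolution check goes through cleanly: using $\coth R \leq 1/R + R/3$ (itself equivalent to $R\cosh R \leq (1+R^2/3)\sinh R$, verifiable coefficient-wise), one finds $g' - g(n\coth R - g) \geq \tfrac{2n}{3} > 0$, a strict margin. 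One small imprecision worth tightening: ``matching asymptotics as $R\to 0^+$'' is not by itself enough for the sub/supersolution comparison --- you need the \emph{strict} ordering $\phi<g$ on a right neighborhood of $0$. This does hold: expanding gives $\phi(R) = \tfrac{n+1}{R} + \tfrac{n(n+1)}{3(n+3)}R + O(R^3)$, and since $\tfrac{n+1}{n+3}<1$, the difference $\phi - g \sim -\tfrac{2n}{3(n+3)}R < 0$. With that sign pinned down, the first-crossing argument (strict supersolution inequality forces $(\phi-g)' < 0$ at any touching point, contradicting a crossing from below) closes the gap.
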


By (\ref{volcomp}), we can get the following volume comparison
\be
\frac{m(B_{r_{2}}(x_0))}{m(B_{r_{1}}(x_0))}\leq \left(\frac{r_{2}}{r_{1}}\right)^{n+1}e^{\frac{K+ \vartheta^{2}}{6}R^2}, \label{doubvol}
\ee
which implies the doubling volume property of $(M, F, m)$, that is, there is a uniform constant $D_{0}$ such that $m(B_{2r}(x_0))\leq D_{0} m(B_{r}(x_0))$ for any $x_0 \in M$ and $0< r < R/2$.

From  (\ref{doubvol}), we can get the following local $p$-Poincar\'{e} inequality. For the details, please see Corollary 4.6 in \cite{ChengF2}.

\begin{lem}{\rm(\cite{ChengF2})}\label{global-P-ineq}
Let $(M, F, m)$ be an $n$-dimensional forward complete Finsler measure space with finite reversibility $\Lambda$. Assume that ${\rm Ric}_{\infty}\geq -K$ for some $K\geq 0$. Fix $1\leq p<\infty$, then there exist positive constants $d_{i}=d_{i}(p, n, \Lambda)(i=1,2)$ depending only on $p$, $n$ and the reversibility $\Lambda$ of $F$, such that
\be
\int_{B_{R}}\left|u-\bar{u}\right|^p dm \leq d_1 e^{d_2 (K+ \vartheta^2)R^2} R^p \int_{B_{R}} F^{*p}(du) dm  \label{pi12}
\ee
for $u \in W_{\mathrm{loc}}^{1,p}(M)$, where $\bar{u}:=\frac{\int_{B_{R}}u dm} {m(B_{R})}$.
\end{lem}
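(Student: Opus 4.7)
The plan is to derive Lemma~\ref{global-P-ineq} from the Bishop--Gromov-type volume comparison (\ref{volcomp}) and the resulting doubling property (\ref{doubvol}) by combining a geodesic \emph{segment inequality} with a Whitney/telescoping chain argument, in the spirit of Saloff-Coste, Buser and Jerison. Since doubling already holds on every forward ball up to radius $R$, the abstract framework of Poincar\'{e} inequalities on metric measure spaces with doubling is essentially available, and the only genuinely new analysis takes place in the passage from the volume comparison to a \emph{weak} local Poincar\'{e} inequality.

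First I would establish a Finsler segment inequality: for any nonnegative Borel function $g$ on a slightly enlarged ball $B_{C\Lambda R}$,
\[
\int_{B_{R}}\int_{B_{R}} \left(\int_{0}^{d_{F}(x,y)} g(\gamma_{x,y}(s))\, ds\right) dm(y)\, dm(x) \leq C(n,K,\vartheta,\Lambda)\, R\, m(B_{R})\int_{B_{C\Lambda R}} g\, dm,
\]
where $\gamma_{x,y}$ is the unit-speed minimizing geodesic from $x$ to $y$. The key input is that the Jacobian of the change of variables coming from the exponential map at $x$ is controlled by (\ref{volcomp}); the Finsler asymmetry forces the enlargement from $R$ to $C\Lambda R$ so that all relevant geodesics remain inside the domain where the comparison applies.

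Next, by the fundamental theorem of calculus along $\gamma_{x,y}$ together with H\"older's inequality,
\[
|u(x)-u(y)|^{p} \leq d_{F}(x,y)^{p-1}\int_{0}^{d_{F}(x,y)} F^{*p}(du)(\gamma_{x,y}(s))\, ds.
\]
Since $d_{F}(x,y) \leq (1+\Lambda)R$ for $x,y\in B_{R}$, substituting into the Jensen-type bound
\[
|u(x)-\bar{u}|^{p} \leq \frac{1}{m(B_{R})}\int_{B_{R}}|u(x)-u(y)|^{p}\, dm(y)
\]
and applying the segment inequality with $g = F^{*p}(du)$ yields a weak Poincar\'{e} inequality whose right-hand side integrates $F^{*p}(du)$ over the enlarged ball $B_{C\Lambda R}$. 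The final step is a Boman chain / Whitney covering argument based solely on the doubling estimate (\ref{doubvol}); iterating the doubling constant along such a chain absorbs the enlargement and produces the exponential factor $e^{d_{2}(K+\vartheta^{2})R^{2}}$ in the strong form (\ref{pi12}).

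The main obstacle is the asymmetry of $F$: every step that in the Riemannian setting uses symmetry of the distance (Fubini in the segment integral, interchangeability of $x$ and $y$, estimates for the Jacobian along reversed geodesics) has to be executed with careful bookkeeping in both directions, each time absorbing a multiplicative factor controlled by $\Lambda$. Isolating these factors and checking that they combine into constants depending only on $p$, $n$ and $\Lambda$, with the curvature dependence localized in the exponential prefactor inherited from (\ref{volcomp}), is the delicate part of the argument and is precisely where the Finsler proof diverges from the classical Riemannian one.
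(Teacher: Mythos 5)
The paper itself does not prove this lemma: it records it as Corollary 4.6 of \cite{ChengF2} and refers the reader there, so there is no in-text argument to compare your sketch against. With that caveat, your outline --- Finsler segment inequality from a Jacobian comparison, then Jensen and the pointwise FTC--H\"older estimate to get a weak $(p,p)$-Poincar\'e inequality on an enlarged ball, then a Boman chain / Whitney decomposition using the doubling estimate (\ref{doubvol}) to restore equal balls --- is the canonical route to such a local Poincar\'e inequality from volume comparison, and its output does have the advertised form $d_1 e^{d_2(K+\vartheta^2)R^2}R^p$.

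Two corrections are needed before the sketch is a proof. First, the FTC bound is missing the reversibility factor: along a unit-speed minimizing geodesic $\gamma$ one has $du(\dot\gamma)\le F^*(du)$, but $-du(\dot\gamma)=du(-\dot\gamma)\le F^*(du)F(-\dot\gamma)\le\Lambda F^*(du)$, so the correct estimate is
\[
|u(x)-u(y)|^p\le \Lambda^p\, d_F(x,y)^{p-1}\int_0^{d_F(x,y)}F^{*p}(du)(\gamma_{x,y}(s))\,ds,
\]
which is harmless but must be tracked. Second, the constant in your proposed segment inequality cannot be written as $C(n,K,\vartheta,\Lambda)$ independent of $R$: under ${\rm Ric}_\infty\ge -K$ the Heintze--Karcher/Bishop-type Jacobian comparison that underlies the segment inequality already carries an $e^{c(K+\vartheta^2)R^2}$ factor. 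Note also that the input you need for the segment inequality is this pointwise comparison of the volume form of $\exp_x$ along geodesics, not the ball-volume estimate (\ref{volcomp}) itself; the latter is a consequence, not the other way around. The exponential factor therefore enters both in the segment inequality and again in the chaining (through the doubling constant), not only in the chaining as you suggest. With these repairs the outline is sound, but the two substantive blocks --- the Finsler segment inequality with forward/backward bookkeeping, and the Boman chain on a non-reversible metric measure space --- are asserted as black boxes and would have to be carried out in full to count as a proof of Corollary 4.6 in \cite{ChengF2}.
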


Given a Finsler manifold $(M, F)$, we say that a family of open subsets ${\{U_{i}\}}_{i \in I}$ of $M$ is a uniformly locally finite covering of $M$ if ${\{U_{i}\}}_{i \in I}$ is a covering of $M$ and there exists an integer $N$ such that each point $x \in M$ has a neighborhood which intersects at most $N$ open subsets of ${\{U_{i}\}}_{i \in I}$, where $I$ is an index set. From Lemma \ref{volume} and following the arguments of Lemma 1.1 in \cite{Heby} and Lemma 5.1 in \cite{Xia},  we can obtain the following lemma.

\begin{lem} \label{unicov}
Let $(M, F, m)$ be an $n$-dimensional forward complete Finsler manifold with finite reversibility $\Lambda$. Assume that ${\rm Ric}_{\infty} \geq -K$ for some $K \geq 0$. Let $ R > 0$ be given. Then, there exist a sequence $(x_{i})_{i\in I}$ of points of $M$, such that for any $0 < r  \leq R$,
\begin{itemize}
  \item[{\rm (1)}]
  the family $\{B_{r}(x_{i})\}_{i \in I}$ is a uniformly locally finite covering of $M$, and there is an upper bound for $N$ depending on $n, r, R, K, \vartheta$ and $\Lambda$;
  \item[{\rm (2)}]
  for any $i < j$, $B_{r/2\Lambda}(x_{i})\cap B_{r/2\Lambda}(x_{j}) = \emptyset$.
\end{itemize}
\end{lem}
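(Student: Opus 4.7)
The plan is a standard Vitali-type maximal packing argument, adapted to the non-reversible Finsler setting by paying the reversibility constant $\Lambda$ whenever one switches the order of endpoints in the distance. Fix $r \in (0,R]$ and set $s := r/(2\Lambda)$. By Zorn's lemma, select a maximal family $(x_i)_{i \in I} \subset M$ with the property that the forward balls $B_s(x_i)$ are pairwise disjoint; this choice immediately yields property (2). For the covering half of (1), take any $x \in M$. If $B_s(x)$ were disjoint from every $B_s(x_i)$ we could enlarge the family and contradict maximality, so there exist an index $i_0$ and a point $z \in B_s(x) \cap B_s(x_{i_0})$. Then $d_F(x_{i_0},z) < s$, and $d_F(z,x) \leq \Lambda\, d_F(x,z) < \Lambda s$, so the forward triangle inequality gives $d_F(x_{i_0},x) < (1+\Lambda)s = r(1+\Lambda)/(2\Lambda) \leq r$ since $\Lambda \geq 1$. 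Hence $x \in B_r(x_{i_0})$.

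For the multiplicity bound, suppose $x \in B_r(x_{i_1}) \cap \cdots \cap B_r(x_{i_N})$. From $d_F(x_{i_k},x) < r$ and $\Lambda$-reversibility we have $d_F(x,x_{i_k}) < \Lambda r$, so the triangle inequality forces every $x_{i_k}$ into the forward ball $B_{(1+\Lambda)r}(x_{i_1})$, and in turn the disjoint balls $B_s(x_{i_k})$ all lie in $B_{(1+\Lambda)r+s}(x_{i_1})$. Thus
\begin{equation*}
\sum_{k=1}^{N} m\bigl(B_s(x_{i_k})\bigr) \leq m\bigl(B_{(1+\Lambda)r + s}(x_{i_1})\bigr).
\end{equation*}
A symmetric triangle-inequality argument, using $d_F(x_{i_k}, x_{i_1}) \leq \Lambda\, d_F(x_{i_1}, x_{i_k}) < \Lambda(1+\Lambda) r$, shows $B_{(1+\Lambda)r + s}(x_{i_1}) \subset B_{(1+\Lambda)^2 r + s}(x_{i_k})$ for every $k$. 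Applying the volume comparison (\ref{volcomp}) at the common center $x_{i_k}$ with $r_1 = s$ and $r_2 = (1+\Lambda)^2 r + s$ gives
\begin{equation*}
m\bigl(B_{(1+\Lambda)r+s}(x_{i_1})\bigr) \leq C\, m\bigl(B_s(x_{i_k})\bigr),
\end{equation*}
where
\begin{equation*}
C := \left(\frac{(1+\Lambda)^2 r + s}{s}\right)^{n+1} \exp\!\left(\tfrac{K+\vartheta^2}{6}\bigl[((1+\Lambda)^2 r + s)^2 - s^2\bigr]\right)
\end{equation*}
depends only on $n, r, R, K, \vartheta, \Lambda$ (recalling $r \leq R$). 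Summing the last inequality over $k$ and comparing with the previous display yields $N \leq C$, which is the desired uniform bound on the local multiplicity.

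The only delicate point is careful bookkeeping of the factors of $\Lambda$ produced by each passage between forward and backward distances: the volume comparison (\ref{volcomp}) is a statement about \emph{concentric} balls, so the shift of reference center from $x_{i_1}$ to $x_{i_k}$ must be absorbed by enlarging the outer radius from $(1+\Lambda)r + s$ to $(1+\Lambda)^2 r + s$. Once this geometric bookkeeping is settled, the remainder is the classical Vitali packing/covering scheme of Hebey's Lemma 1.1, and no further analytic input beyond Lemma \ref{volume} is needed.
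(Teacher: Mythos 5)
Your argument is correct and follows essentially the same route the paper itself intends: the maximal-packing plus volume-comparison scheme of Hebey's Lemma 1.1 and Xia's Lemma 5.1, which the paper cites without writing out, and your bookkeeping of the reversibility factors and the application of Lemma \ref{volume} both check out. Two cosmetic remarks only: to match the paper's definition of uniform local finiteness you should bound the number of balls meeting a small ball $B_{s}(x)$ rather than merely containing $x$ (the identical packing argument with marginally larger radii does this), and your sequence is chosen after $r$ is fixed, which is exactly how the lemma is used in the proof of Theorem \ref{Poincare-ineq} (the statement's quantifier order, one sequence for all $0<r\le R$, cannot be meant literally).
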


From Lemma \ref{volume}, Lemma \ref{global-P-ineq} and Lemma \ref{unicov}, we can obtain a global $p$-Poincar\'{e} inequality stated in the following theorem.

\begin{thm} \label{Poincare-ineq}
Let $(M, F, m)$ be an $n$-dimensional forward complete Finsler manifold with finite reversibility $\Lambda$. Assume that ${\rm Ric}_{\infty} \geq -K$ for some $K \geq 0$. Let $R > 0$ be some positive real number and $p\geq 1$. Then, there exists a positive constant $C=C(n, K, R, \vartheta, \Lambda)$, such that for any $r\in (0, R)$ and $u \in W^{1,p}(M)$,
\begin{equation}
\int_{M} |u - \bar{u}_{r}|^{p} \,dm \leq C  r^{p} \int_{M} F^{*p}(du) \,dm  , \label{Poincare-1}
\end{equation}
where $\bar{u}_{r}=\bar{u}_{r}(x):= \frac{1}{m(B_{r}(x))} \int_{B_{r}(x)} u \,dm $, $x\in M$.
\end{thm}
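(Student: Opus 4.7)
The plan is to combine the local $p$-Poincar\'e inequality of Lemma~\ref{global-P-ineq} with the uniformly locally finite covering supplied by Lemma~\ref{unicov}, bridging the local ball-averages $c_i$ to the moving average $\bar{u}_r(x)$ via the doubling property~(\ref{doubvol}). Concretely, I would first apply Lemma~\ref{unicov} with a radius slightly larger than $R$ (say $3R$) to obtain a sequence of centers $(x_i)_{i\in I}$ such that, for every $r\in(0,R)$, both $\{B_r(x_i)\}_{i\in I}$ and $\{B_{2r}(x_i)\}_{i\in I}$ are uniformly locally finite coverings of $M$ with overlap multiplicity bounded by some $N=N(n,R,K,\vartheta,\Lambda)$. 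For each $i$, set $c_i:=\frac{1}{m(B_{2r}(x_i))}\int_{B_{2r}(x_i)}u\,dm$.

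For $x\in B_r(x_i)$, I would split
$$|u(x)-\bar{u}_r(x)|^p\leq 2^{p-1}\bigl(|u(x)-c_i|^p+|c_i-\bar{u}_r(x)|^p\bigr).$$
The first summand integrates directly: by $B_r(x_i)\subset B_{2r}(x_i)$ and Lemma~\ref{global-P-ineq} applied on the ball $B_{2r}(x_i)$, its integral over $B_r(x_i)$ is bounded by $C_1 r^p\int_{B_{2r}(x_i)}F^{*p}(du)\,dm$ with $C_1=C_1(p,n,K,R,\vartheta,\Lambda)$ (the exponential factor $e^{d_2(K+\vartheta^2)(2r)^2}$ in Lemma~\ref{global-P-ineq} being absorbed using $r<R$). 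For the second summand, the forward triangle inequality gives $B_r^{+}(x)\subset B_{2r}^{+}(x_i)$ whenever $x\in B_r^{+}(x_i)$, so Jensen's inequality yields
$$|c_i-\bar{u}_r(x)|^p\leq \frac{1}{m(B_r(x))}\int_{B_{2r}(x_i)}|u-c_i|^p\,dm.$$
Using $d_F(x,x_i)\leq\Lambda d_F(x_i,x)<\Lambda r$ one sees $B_r^{+}(x_i)\subset B_{(\Lambda+1)r}^{+}(x)$, and the volume comparison~(\ref{volcomp}) then produces a constant $D_\Lambda=D_\Lambda(n,K,R,\vartheta,\Lambda)$ with $m(B_r(x_i))\leq D_\Lambda\,m(B_r(x))$ for every $x\in B_r^{+}(x_i)$. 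Integrating the inequality for $|c_i-\bar{u}_r(x)|^p$ over $B_r(x_i)$ and applying Lemma~\ref{global-P-ineq} once more to $B_{2r}(x_i)$ gives a bound of the same form $C_2 r^p\int_{B_{2r}(x_i)}F^{*p}(du)\,dm$.

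Combining the two estimates, summing over $i\in I$ and invoking the bounded overlap of $\{B_{2r}(x_i)\}_{i\in I}$, I obtain
$$\int_M|u-\bar{u}_r|^p\,dm\leq\sum_{i\in I}\int_{B_r(x_i)}|u-\bar{u}_r|^p\,dm\leq C\,r^p\int_M F^{*p}(du)\,dm$$
with $C=C(n,K,R,\vartheta,\Lambda)$, as required. The main obstacle is the second splitting term: passing from the fixed local averages $c_i$ to the moving average $\bar{u}_r(x)$ depends crucially on the reverse doubling estimate $m(B_r(x))\geq D_\Lambda^{-1}m(B_r(x_i))$ uniformly for $x\in B_r^{+}(x_i)$, and this is where the reversibility $\Lambda$ genuinely enters the constant; the rest is bookkeeping for the covering argument and repeated invocation of the local Poincar\'e inequality.
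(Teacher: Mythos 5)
Your proof is correct and follows essentially the same path as the paper: you cover $M$ using Lemma~\ref{unicov}, apply the local $p$-Poincar\'e inequality (Lemma~\ref{global-P-ineq}) on slightly enlarged balls, and pass from the fixed average $c_i$ to the moving average $\bar{u}_r(x)$ via Jensen's inequality together with the inclusion $B_r^{+}(x_i)\subset B_{(\Lambda+1)r}^{+}(x)$ and the volume comparison (\ref{doubvol}). The only difference is that the paper uses a three-term triangle decomposition passing through both $\bar{u}_r(x_i)$ and $\bar{u}_{(\Lambda+1)r}(x_i)$, whereas your two-term split through $c_i=\bar{u}_{2r}(x_i)$ is marginally cleaner but relies on exactly the same ingredients.
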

\vskip 3mm

\begin{proof}
 By Lemma \ref{global-P-ineq}, for any $x \in M$, any $r \in (0, R)$ and any $u \in W^{1,p}_{\rm loc}(M)$,
\begin{equation}{\label{Poincare-2}}
\int_{B_{r}(x)} |u - \bar{u}_{r}|^{p} \,dm \leq d_{1}e^{d_{2}(K + \vartheta^{2})r^{2}} r^{p} \int_{B_{r}(x)} F^{*p}(du) \,dm.
\end{equation}
Let $r \in (0, R)$ be given, and let $(x_{i})_{i \in I}$ be a sequence of points of M such that
\begin{equation*}
M=\bigcup\limits_{i \in I} B_{r}(x_{i}) \ \ \text{and} \  B_{r/2\Lambda}(x_{i})\cap B_{r/2\Lambda}(x_{j})= \emptyset \ \ \text{if} \ \ i<j.
\end{equation*}
Then, by H\"{o}lder inequality, we have
\beq
\int_{M} |u - \bar{u}_{r}|^{p} \,dm & \leq & \sum\limits_{i \in I} \int_{B_{r}(x_{i})} |u - \bar{u}_{r}|^{p} \,dm   \nonumber \\
& \leq & \sum\limits_{i \in I} \int_{B_{r}(x_{i})} \left(|u - \bar{u}_{r}(x_{i})|  + |\bar{u}_{r}(x_{i}) - \bar{u}_{(\Lambda+1)r}(x_{i})| +|\bar{u}_{r} - \bar{u}_{(\Lambda+1)r}(x_{i})|\right)^{p} \,dm  \nonumber\\
& \leq & 3^{p-1}\left(\sum\limits_{i \in I} \int_{B_{r}(x_{i})} |u - \bar{u}_{r}(x_{i})|^{p}\,dm  + \sum\limits_{i \in I} \int_{B_{r}(x_{i})} |\bar{u}_{r}(x_{i}) - \bar{u}_{(\Lambda+1)r}(x_{i})|^{p}\,dm \right. \nonumber\\
& & \left.+ \sum\limits_{i \in I} \int_{B_{r}(x_{i})}  |\bar{u}_{r} - \bar{u}_{(\Lambda+1)r}(x_{i})|^{p} \,dm\right), \label{th43.4-1}
\eeq
where we have used the inequality $(a+b+c)^{p} \leq 3^{p-1}(a^{p}+b^{p}+c^{p})$ for $a,b,c\geq 0$ in the third inequality.

Next, by (\ref{Poincare-2}), we get that
\beq
\sum\limits_{i \in I}  \int_{B_{r}(x_{i})} |u - \bar{u}_{r}(x_{i})|^{p} \,dm & \leq & d_{1} e^{d_{2}(K+\vartheta^{2})r^{2}}  r^{p} \sum\limits_{i \in I} \int_{B_{r}(x_{i})} F^{*p}(du) \,dm  \nonumber\\
& \leq & Nd_{1} e^{d_{2}(K+ \vartheta^{2})r^{2}}  r^{p} \int_{M} F^{*p}(du) \,dm.  \label{th43.4-2}
\eeq
Furthermore,
\beq
\sum\limits_{i \in I} \int_{B_{r}(x_{i})} |\bar{u}_{r}(x_{i}) - \bar{u}_{(\Lambda+1)r}(x_{i})|^{p} \,dm & = & \sum\limits_{i \in I} m(B_{r}(x_{i})) |\bar{u}_{r}(x_{i}) - \bar{u}_{(\Lambda+1)r}(x_{i})|^{p} \nonumber \\
& \leq & \sum\limits_{i \in I}m(B_{r}(x_{i}))^{1-p} \left(\int_{B_{r}(x_{i})} |u - \bar{u}_{(\Lambda+1)r}(x_{i})| dm \right)^{p}.    \label{th43.4-3}
\eeq
By H\"{o}lder inequality  again, we have
$$
\left(\int_{B_{r}(x_{i})} |u - \bar{u}_{(\Lambda+1)r}(x_{i})| \,dm \right)^{p} \leq \int_{B_{r}(x_{i})} |u - \bar{u}_{(\Lambda+1)r}(x_{i})|^{p} \,dm\cdot m(B_{r}(x_{i}))^{p-1}.
$$
Plugging the above inequality into (\ref{th43.4-3}) and by (\ref{Poincare-2}), we obtain
\beq
\sum\limits_{i \in I} \int_{B_{r}(x_{i})} |\bar{u}_{r}(x_{i}) - \bar{u}_{(\Lambda+1)r}(x_{i})|^{p} \,dm & \leq & \sum\limits_{i \in I} \int_{B_{(\Lambda+1)r}(x_{i})} |u- \bar{u}_{(\Lambda+1)r}(x_{i})|^{p} \,dm      \nonumber\\
& \leq & N (\Lambda+1)^{p}d_{1} e^{d_{2}(K+\vartheta^{2})(\Lambda +1)^{2}r^{2}}  r^{p}  \int_{M} F^{*p}(du) \,dm.   \label{th43.4-4}
\eeq
Independently, we have
\beq
&&\sum\limits_{i \in I} \int_{B_{r}(x_{i})} |\bar{u}_{r} - \bar{u}_{(\Lambda+1)r}(x_{i})|^{p} \,dm \nonumber\\
&& \leq  \sum\limits_{i \in I} \int_{x \in B_{r}(x_{i})} \left\{\frac{1}{m(B_{r}(x))} \int_{z \in B_{r}(x)} |u(z) - \bar{u}_{(\Lambda+1)r}(x_{i})| \,dm(z)\right\}^{p}  \,dm(x) \nonumber \\
&& = \sum\limits_{i \in I} \int_{x \in B_{r}(x_{i})} \left\{\left(\frac{1}{m(B_{r}(x))}\right)^{p} \left(\int_{z \in B_{r}(x)} |u(z) - \bar{u}_{(\Lambda+1)r}(x_{i})| \,dm(z)\right)^{p}\right\}  \,dm(x).\label{th43.4-5}
\eeq
 Again, by using H\"{o}lder inequality,
 $$
 \left(\int_{z \in B_{r}(x)} |u(z) - \bar{u}_{(\Lambda+1)r}(x_{i})| \,dm(z)\right)^{p} \leq \int_{z \in B_{r}(x)} |u(z) - \bar{u}_{(\Lambda+1)r}(x_{i})|^{p} \,dm(z) \cdot m(B_{r}(x))^{p-1}.
 $$
Plugging the above ineuqality into (\ref{th43.4-5}) yields
\beqn
&& \sum\limits_{i \in I} \int_{B_{r}(x_{i})} |\bar{u}_{r} - \bar{u}_{(\Lambda+1)r}(x_{i})|^{p} \,dm\\
&& \leq  \sum\limits_{i \in I} \int_{x \in B_{r}(x_{i})} \left\{\frac{1}{m(B_{r}(x))} \int_{z \in B_{r}(x)} |u(z) - \bar{u}_{(\Lambda+1)r}(x_{i})|^{p} \,dm(z)\right\}  \,dm(x) \\
&& \leq  \sum\limits_{i \in I} \int_{x \in B_{r}(x_{i})} \left\{\frac{1}{m(B_{r}(x))} \int_{z \in B_{(\Lambda+1)r}(x_{i})} |u(z) - \bar{u}_{(\Lambda+1)r}(x_{i})|^{p} \,dm(z)\right\}  \,dm(x) \\
&& \leq  \sum\limits_{i \in I} \int_{z \in B_{(\Lambda+1)r}(x_{i})} |u(z) - \bar{u}_{(\Lambda+1)r}(x_{i})|^{p} \, dm(z) \cdot \int_{x \in B_{r}(x_{i})} \frac{1}{m(B_{r}(x))} \, dm(x),
\eeqn
where we have used the fact that $B_{r}(x)\subset B_{(\Lambda +1)r}(x_{i})$ for $x \in B_{r}(x_{i})$ in the second inequality.  Using (\ref{Poincare-2}), we obtian
$$
\int_{z \in B_{(\Lambda+1)r}(x_{i})} |u(z) - \bar{u}_{(\Lambda+1)r}(x_{i})|^{p} \, dm(z) \leq (\Lambda+1)^{p}d_{1} e^{d_{2}(K+ \vartheta^{2})(\Lambda+1)^{2}r^{2}}  r^{p} \int_{B_{(\Lambda+1)r}(x_{i})} F^{*p}(du) \,dm.
$$
On the other hand, by volume comparison (\ref{doubvol}), we have the following for $0< r < (\Lambda +1)r < 2\Lambda R$
$$
\frac{1}{m(B_{r}(x))} \leq \frac{(\Lambda+1)^{n+1}e^{\frac{(K+\vartheta^{2})(2 \Lambda)^{2}R^{2}}{6}}}{m(B_{(\Lambda+1)r}(x))}.
$$
Note that $x \in B_{r}(x_{i})$ implies that $B_{r}(x_{i}) \subset B_{(\Lambda+1)r}(x)$. Hence
\beqn
\int_{x \in B_{r}(x_{i})} \frac{1}{m(B_{r}(x))} \, dm(x) & \leq & \int_{B_{r}(x_{i})}\frac{(\Lambda+1)^{n+1}e^{\frac{(K+ \vartheta^{2})(2\Lambda)^{2}R^{2}}{6}}}{m(B_{(\Lambda+1)r}(x))}  \,dm(x) \\
& \leq & \int_{B_{r}(x_{i})}\frac{(\Lambda+1)^{n+1}e^{\frac{(K+ \vartheta^{2})(2\Lambda)^{2}R^{2}}{6}}}{m(B_{r}(x_{i}))}  \,dm(x) \\
& = & (\Lambda+1)^{n+1}e^{\frac{(K+ \vartheta^{2})(2\Lambda)^{2}R^{2}}{6}}.
\eeqn
Thus, we have
\beq
&& \sum\limits_{i \in I} \int_{B_{r}(x_{i})} |\bar{u}_{r} - \bar{u}_{(\Lambda+1)r}(x_{i})|^{p} \,dm \nonumber \\
&& \leq N (\Lambda+1)^{p+n+1}d_{1}e^{d_{2}(K+ \vartheta^{2})(\Lambda+1)^{2}R^{2}} \ e^{\frac{(K+ \vartheta^{2})(2\Lambda)^{2}R^{2}}{6}}  r^{p} \int_{M} F^{*p}(du) \, dm. \label{th43.4-6}
\eeq
Finally, substituting (\ref{th43.4-2}), (\ref{th43.4-4}), (\ref{th43.4-6}) into (\ref{th43.4-1}) yields
$$
\int_{M} | u - \bar{u}_{r}|^{p} \, dm \leq C  r^{p} \int_{M} F^{*p}(du) \,dm,
$$
where $C=C(n,  K, R, \vartheta , \Lambda)>0$ is a constant.  This completes the proof of Theorem \ref{Poincare-ineq}.
\end{proof}

\section{A $(p, q)$-Sobolev inequality}\label{Ssob}

This section is devoted to establish  a global optimal $(p, q)$-Sobolev inequality and prove Theorem \ref{sob5}. For this aim, we need the following lemma which is Lemma 5.3 in \cite{Xia}.

\begin{lem}{\rm (\cite{Xia})}\label{sobll} Let $(M, F, m)$ be an $n$-dimensional forward complete Finsler manifold with finite reversibility $\Lambda$. Assume that, for any $u \in W^{1,1}(M)$, $\|u\|_{L^{\frac{\nu}{\nu-1}}} \leq \tilde{c}\|u\|_{1,1}$ for some $\tilde{c} > 0$ and $\nu>1$. Then, for any $u \in W^{1,q}(M) (1 \leq q < \nu)$, there exist positive constants $c_{i}=c_{i}(\tilde{c}, q, \nu, \Lambda)( i = 1, 2)$ such that
\begin{equation}{\label{sob2}}
\left(\int_{M} | u |^{p} \, dm\right)^{\frac{1}{p}} \leq c_{1} \left(\int_{M} | u |^{q} \, dm\right)^{\frac{1}{q}} + c_{2} \left(\int_{M} [F^{*}(du)]^{q} \, dm\right)^{\frac{1}{q}}
\end{equation}
holds, where $\frac{1}{p} = \frac{1}{q}-\frac{1}{\nu}$.
\end{lem}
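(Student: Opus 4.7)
The plan is to apply the assumed inequality $\|v\|_{L^{p^{*}}} \leq \tilde{c}\,\|v\|_{1,1}$, where $p^{*} := \nu/(\nu-1)$, to the auxiliary function $v = |u|^{s}$ for a carefully chosen exponent $s \geq 1$. The exponent must satisfy two compatibility conditions simultaneously: $s p^{*} = p$, so that the left-hand side of the hypothesis rewrites as $\|u\|_{L^{p}}^{s}$, and $(s-1)q' = p$ with $q' = q/(q-1)$, so that the gradient term pairs cleanly with $\|F^{*}(du)\|_{L^{q}}$ via H\"{o}lder's inequality. A direct computation from $1/p = 1/q - 1/\nu$ shows both conditions are met simultaneously by
\[
s \,=\, \frac{q(\nu - 1)}{\nu - q},
\]
and one verifies $s \in [1, p)$ whenever $1 \leq q < \nu$.

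First I would prove the inequality for non-negative $u \in C_{0}^{\infty}(M)$ and then extend to $W^{1,q}(M)$ by density. For such $u$, $v = u^{s}$ is admissible in $W^{1,1}(M)$ with $F^{*}(dv) = s\,u^{s-1}F^{*}(du)$, while the reverse term is controlled using the reversibility bound on the dual Finsler metric, giving $\overleftarrow{F}(\overleftarrow{\nabla} v) \leq \Lambda s\,u^{s-1}F^{*}(du)$. Substituting $v = u^{s}$ into the hypothesis yields
\[
\|u\|_{L^{p}}^{s} \,\leq\, \tilde{c}\Big[\int_{M} u^{s}\,dm \,+\, \frac{(1+\Lambda)s}{2}\int_{M} u^{s-1} F^{*}(du)\,dm\Big].
\]

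Both integrals on the right are then estimated by H\"{o}lder. Writing $u^{s} = u \cdot u^{s-1}$ and applying H\"{o}lder with conjugate exponents $q, q'$, together with the matching identity $(s-1)q' = p$, gives
\[
\int_{M} u^{s}\,dm \,\leq\, \|u\|_{L^{q}}\,\|u\|_{L^{p}}^{s-1}, \qquad \int_{M} u^{s-1} F^{*}(du)\,dm \,\leq\, \|u\|_{L^{p}}^{s-1}\,\|F^{*}(du)\|_{L^{q}}.
\]
Substituting these into the displayed bound and dividing both sides by $\|u\|_{L^{p}}^{s-1}$ (legitimate once one is working with bounded, compactly supported approximants for which this factor is finite and nonzero) produces
\[
\|u\|_{L^{p}} \,\leq\, \tilde{c}\,\|u\|_{L^{q}} \,+\, \tilde{c}\,\frac{(1+\Lambda)s}{2}\,\|F^{*}(du)\|_{L^{q}},
\]
which is precisely (\ref{sob2}) with $c_{1} = \tilde{c}$ and $c_{2} = \tilde{c}(1+\Lambda)s/2$, both depending only on $\tilde{c}, q, \nu, \Lambda$ as required.

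The main obstacle is technical rather than conceptual: making the chain rule $d(|u|^{s}) = s\,|u|^{s-1}\,\mathrm{sgn}(u)\,du$ rigorous for $u \in W^{1,q}(M)$ with a non-integer $s$, and avoiding circular reasoning when dividing by $\|u\|_{L^{p}}^{s-1}$ before $u \in L^{p}(M)$ is known. Both issues are handled by the standard truncation/approximation device: apply the inequality first to $u_{k} = \min(|u|,k)$ times a smooth cutoff, for which $\|u_{k}\|_{L^{p}}$ is automatically finite and the chain rule is classical, then pass to the limit $k \to \infty$ by monotone convergence.
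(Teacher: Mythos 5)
Your argument is correct and is essentially the standard one: the paper itself gives no proof of this lemma, citing it as Lemma 5.3 of Xia, and Xia's proof (following Hebey) is exactly your device of applying the $W^{1,1}$ hypothesis to $|u|^{s}$ with $s=q(\nu-1)/(\nu-q)$, pairing both terms via H\"older with $(s-1)q'=p$, controlling the reverse-gradient term by the reversibility $\Lambda$, and dividing by $\|u\|_{L^{p}}^{s-1}$ after a truncation argument. No gaps worth flagging beyond the technical points you already identify and handle.
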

Based on Theorem \ref{Poincare-ineq} and Lemma \ref{sobll}, we have the following theorem.

\begin{thm}{\label{sob3}}
Let $(M, F, m)$ be an $n$-dimensional forward complete Finsler manifold with finite reversibility $\Lambda$. Assume that ${\rm Ric}_{\infty} \geq -K$ for some $K \geq 0$, $d= {\rm Diam}(M)< \infty$ and $m_{0}=m(M)>0$. Then, for any $\nu=\nu(n)>n+1$ and $u \in W^{1,q}(M) \ (1\leq q < \nu)$, there exist positive constants $c_{i}=c_{i}(\Lambda, n, K, m_{0}, d, \vartheta)( i = 3, 4)$ such that
\begin{equation}{\label{sob4}}
\left(\int_{M} | u |^{p} \, dm\right)^{\frac{1}{p}} \leq c_{3} \left(\int_{M} | u |^{q} \, dm\right)^{\frac{1}{q}} + c_{4} \left(\int_{M} [F^{*}(du)]^{q} \, dm\right)^{\frac{1}{q}}
\end{equation}
where $\frac{1}{p} = \frac{1}{q}-\frac{1}{\nu}$.
\end{thm}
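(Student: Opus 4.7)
The plan is to invoke Lemma \ref{sobll} with the given $\nu>n+1$; its conclusion is exactly (\ref{sob4}), and the constants $c_3,c_4$ will inherit the claimed dependence provided we can verify the lemma's hypothesis
\[
\|u\|_{L^{\nu/(\nu-1)}(M)}\leq \tilde c\,\|u\|_{W^{1,1}(M)}
\]
for all $u\in W^{1,1}(M)$, with some $\tilde c=\tilde c(\Lambda,n,K,m_0,d,\vartheta)$. Forward completeness, $\mathrm{Diam}(M)=d<\infty$, and Hopf-Rinow force $M$ to be compact; applying Theorem \ref{Poincare-ineq} with $R=2d$ and any fixed $r\in(d,2d)$ makes $B_r(x)=M$ for every $x\in M$, so $\bar u_r(x)\equiv\bar u:=m_0^{-1}\int_M u\,dm$. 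The case $p=1$ of (\ref{Poincare-1}) then yields the global $L^1$-Poincar\'e
\[
\int_M|u-\bar u|\,dm\leq C_1\int_M F^*(du)\,dm,\qquad C_1=C_1(n,K,d,\vartheta,\Lambda),
\]
which combined with $\|\bar u\|_{L^{\nu/(\nu-1)}}=|\bar u|\,m_0^{(\nu-1)/\nu}\leq m_0^{-1/\nu}\|u\|_{L^1}$ reduces the hypothesis to an $L^{\nu/(\nu-1)}$-bound on $u-\bar u$ in terms of $\|F^*(du)\|_{L^1}$.

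Since $\nu/(\nu-1)>1$, H\"older alone cannot close this gap; a genuine critical Sobolev-Poincar\'e estimate is required. The plan is to establish
\[
\|u-\bar u\|_{L^{(n+1)/n}(M)}\leq C_2\,\|F^*(du)\|_{L^1(M)},\qquad C_2=C_2(\Lambda,n,K,m_0,d,\vartheta),
\]
by combining the local $L^1$-Poincar\'e on balls (Lemma \ref{global-P-ineq}), the volume doubling (\ref{doubvol}) (whose effective exponent is $n+1$), and the uniformly locally finite covering of Lemma \ref{unicov}. The standard Haj\l{}asz-Koskela chaining argument, carried out in the same three-term triangle-inequality pattern (local mean on $B_r(x_i)$, intermediate mean on $B_{(\Lambda+1)r}(x_i)$, global mean) already used in the proof of Theorem \ref{Poincare-ineq}, delivers the critical exponent $Q/(Q-1)=(n+1)/n$. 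Since $\nu>n+1$ implies $\nu/(\nu-1)<(n+1)/n$, H\"older on the finite-measure space $M$ then yields $\|u-\bar u\|_{L^{\nu/(\nu-1)}}\leq m_0^{1/(n+1)-1/\nu}\|u-\bar u\|_{L^{(n+1)/n}}$, verifying the hypothesis of Lemma \ref{sobll}; that lemma now completes the proof.

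The main obstacle is the Haj\l{}asz-Koskela chaining in the irreversible Finsler setting: one must glue the local critical Sobolev-Poincar\'e estimates along the covering while keeping the gradient on the right-hand side in $L^1$ rather than in $L^{\nu/(\nu-1)}$, and handle the irreversibility through the enlargement of balls by the factor $\Lambda+1$ together with the volume-comparison control of $1/m(B_r(x))$, exactly as in the proof of Theorem \ref{Poincare-ineq}. All resulting constants must then be absorbed into a single function of $(\Lambda,n,K,m_0,d,\vartheta)$.
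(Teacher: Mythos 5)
Your overall skeleton is the same as the paper's: reduce everything to Lemma \ref{sobll} and verify its hypothesis $\|u\|_{L^{\nu/(\nu-1)}}\leq \tilde c\,\|u\|_{1,1}$ by combining the Poincar\'e inequality, the volume doubling property (\ref{doubvol}) and the covering Lemma \ref{unicov}; the paper's proof of Theorem \ref{sob3} is precisely a one-line appeal to Theorem \ref{Poincare-ineq}, Lemma \ref{sobll} and (\ref{doubvol}), ``following the arguments of Theorem 5.1 in Xia''. Where you differ is in how the $W^{1,1}\hookrightarrow L^{\nu/(\nu-1)}$ embedding is produced: you propose to first prove the critical $\bigl(\frac{n+1}{n},1\bigr)$ Sobolev--Poincar\'e inequality by Haj\l{}asz--Koskela self-improvement of the local $(1,1)$-Poincar\'e inequality under doubling, and then descend to $L^{\nu/(\nu-1)}$ by H\"older on the finite-measure space, whereas the paper delegates this step to Xia's gluing argument. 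Your route is viable, and your diagnosis that H\"older alone cannot close the gap is correct; in fact the cheap pseudo-Poincar\'e argument that the paper uses for the Nash inequality (Theorem \ref{Poincare-ineq} together with the sup bound on $\bar u_r$, optimized in $r$) only reaches subcritical exponents corresponding to $\nu>n+2$, so some mechanism at or arbitrarily near the critical exponent $\frac{n+1}{n}$ really is needed to cover every $\nu>n+1$.

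The one genuine weak point is that this critical step is invoked rather than proved, and your remark that it can be ``carried out in the same three-term triangle-inequality pattern already used in the proof of Theorem \ref{Poincare-ineq}'' understates the difficulty: that pattern merely sums local estimates at a fixed exponent and can never produce the gain of integrability from an $L^{1}$ gradient bound to an $L^{(n+1)/n}$ bound on $u-\bar u$. The Haj\l{}asz--Koskela improvement requires genuinely different machinery (truncation of $u$ at dyadic levels combined with a maximal-function or Riesz-potential/representation-formula estimate built from Lemma \ref{global-P-ineq} and the relative lower volume bound of exponent $n+1$ coming from (\ref{doubvol})), and it must be adapted to the quasi-metric balls of an irreversible metric, which your factor $\Lambda+1$ only begins to address. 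To be fair, the paper outsources exactly the same step to Xia's Theorem 5.1, so your proposal is at a comparable level of completeness; but if you carry it out, state and prove (or precisely cite, with the irreversibility adaptation) the critical Sobolev--Poincar\'e inequality as a separate lemma, and note that the constants $c_3,c_4$ you obtain also depend on $q$ and $\nu$ through Lemma \ref{sobll}, and that the $W^{1,1}$-norm in its hypothesis involves the reverse gradient, which is controlled via $\overleftarrow{F}(\overleftarrow{\nabla}u)\leq \Lambda F^{*}(du)$.
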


\begin{proof}
By Theorem \ref{Poincare-ineq}, Lemma \ref{sobll} and  doubling volume property (\ref{doubvol}), similar to the arguments of Theorem 5.1 in \cite{Xia}, we can completes the proof the theorem.
\end{proof}

\vskip 2mm

Now we are in the position to prove Theorem \ref{sob5}.
\vskip 2mm

\noindent{\it\bf Proof of Theorem \ref{sob5}:} \ We will give the proof according to the case when $p\geq 2$ or $1< p < 2$.

{\bf Case 1:} \ $p \geq 2$. \ As a starting point, we first prove that, for any $u \in L^{p}(M)$
\begin{equation}
\left(\int_{M} | u |^{p} \, dm\right)^{\frac{2}{p}} \leq m_{0}^{-\frac{2(p-1)}{p}} \left(\int_{M} u  \, dm\right)^{2} + (p-1) \left( \int_{M} |u- \bar{u}|^{p}
\,dm \right)^{\frac{2}{p}}, \label{sob7}
\end{equation}
where $\bar{u} = \frac{1}{m_{0}} \int_{M} u \, dm$.
As one can easily check, it is enough to prove (\ref{sob7}) for any $u \in C^{0}(M)$. Further, for homogeneity reason, and since the inequality is obviously satisfied if $\int_{M} u \, dm= 0$, we can restrict ourselves to the functions $u \in C^{\infty}(M)$ satisfying $\int_{M}u \ dm = m_{0}$.  Now, for such functions, one can write that
$$
u=1+tv,
$$
where $t\geq 0$ and $v \in C^{0}(M)$  such that $\int_{M} v \,dm = 0$ and $\int_{M} v^{2} \, dm =1$. Then (\ref{sob7}) is equivalent to
\begin{equation}
\left(\int_{M} | 1+ tv |^{p} \, dm\right)^{\frac{2}{p}} \leq m_{0}^{\frac{2}{p}} + t^{2}(p-1) \left( \int_{M} |v|^{p}
\,dm \right)^{\frac{2}{p}}.   \label{sob8}
\end{equation}

Let
$$
\varphi(t)=\left( \int_{M} | 1+ tv|^{p} \, dm\right)^{\frac{2}{p}}.
$$
Then $\varphi(0)= m_{0}^{\frac{2}{p}}$, $\varphi'(0)=0$. Further, a simple computation shows that
\beqn
\varphi''(t) & = & 2p\left(\frac{2}{p}-1\right) \left(\int_{M} | 1+ tv|^{p-1}v \ {\rm sgn}(1+tv) \, dm \right)^{2}\left(\int_{M} | 1+ tv|^{p} \, dm \right)^{\frac{2}{p}-2}\\
& & + 2(p-1)\left(\int_{M} | 1+ tv|^{p} \, dm \right)^{\frac{2}{p}-1} \int_{M} | 1+ tv|^{p-2}v^{2} \, dm.
\eeqn
Because of $p \geq 2$, the first term on the right-hand side of above equality is non-positive. On the other hand, by H\"{o}lder inequality, we have
\[
\int_{M} |1+t v|^{p-2} v^{2} dm \leq \left(\int_{M} |1+t v|^{p} d m \right)^{\frac{p-2}{p}}\left(\int_{M} |v|^p d m\right)^{\frac{2}{p}}.
\]
Then we obtain the following
\[
\varphi''(t)  \leq   2(p-1)\left( \int_{M} |v|^{p} \, dm \right)^{\frac{2}{p}},
\]
from which, we have
\beqn
\varphi'(t)  &\leq &  2(p-1)\left( \int_{M} |v|^{p} \, dm \right)^{\frac{2}{p}}t,\\
\varphi(t)  &\leq & m_{0}^{\frac{2}{p}}+ (p-1)\left( \int_{M} |v|^{p} \, dm \right)^{\frac{2}{p}}t^{2}.
\eeqn
Thus we conclude that (\ref{sob7}) holds.

Next, by Theorem \ref{sob3} and Lemma \ref{global-P-ineq} with $R = d$, we have
\beq
\left(\int_{M} | u - \bar{u} |^{p} \, dm\right)^{\frac{1}{p}} &\leq & c_{3} \left(\int_{M} | u - \bar{u} |^{q} \, dm\right)^{\frac{1}{q}} + c_{4} \left(\int_{M} [F^{*}(du)]^{q} \, dm\right)^{\frac{1}{q}} \nonumber \\
 & \leq & c_{3}\left(d_{1}e^{d_{2}(K+\vartheta^{2})d^{2}}d^{q}\right)^{\frac{1}{q}} \left(\int_{M} [F^{*}(du)]^{q} \, dm\right)^{\frac{1}{q}}+ c_{4} \left(\int_{M} [F^{*}(du)]^{q} \, dm\right)^{\frac{1}{q}}\nonumber \\
&:=& B \left(\int_{M} [F^{*}(du)]^{q} \, dm\right)^{\frac{1}{q}},  \label{sob9}
\eeq
where $B=B(\Lambda, n, K, m_{0}, d, \vartheta)$.

For $q \in [1, 2]$, $\frac{q}{2} \leq 1$ and for any $a,b \geq 0$, $(a+b)^{\frac{q}{2}}\leq a^{\frac{q}{2}}+ b^{\frac{q}{2}}$.  From (\ref{sob7}), (\ref{sob9}) and by H\"{o}lder inequality, we obtain
\beqn
\left(\int_{M} | u |^{p} \, dm\right)^{\frac{q}{p}} & \leq & m_{0}^{-\frac{q(p-1)}{p}} \left(\int_{M}  |u | \, dm\right)^{q} + (p-1)^{\frac{q}{2}} \left( \int_{M} |u- \bar{u}|^{p}
\,dm \right)^{\frac{q}{p}}\\
& \leq & m_{0}^{-\frac{q(p-1)}{p}+q-1} \int_{M} |u|^{q} \, dm + (p-1)^{\frac{q}{2}}B^{q} \int_{M} [F^{*}(du)]^{q} \,dm ,
\eeqn
where we have used the fact that
\be
\left(\int_{M}  |u|  \, dm\right)^{q}\leq m_{0}^{q-1}\int_{M} |u|^{q} d m . \label{Holder}
\ee

Note that
$$
-\frac{q(p-1)}{p}+q-1 = -\frac{q}{\nu}.
$$
Hence, we have
$$
\left(\int_{M} | u |^{p} \, dm\right)^{\frac{q}{p}} \leq m_{0}^{-\frac{q}{\nu}} \int_{M} | u |^{q} \, dm + A \int_{M} [F^{*}(du)]^{q} \,dm ,
$$
where $A:= (p-1)^{\frac{q}{2}}B^{q}$. It is just (\ref{sob6}) in the case that $p \geq 2$.

{\bf Case 2:}  $1 < p <2$. \ Fristly, we start with the proof that,  for any $u \in L^{p}(M)$ and $1 \leq q \leq p$,
\begin{equation}
\left(\int_{M} | u |^{p} \, dm\right)^{\frac{q}{p}} \leq m_{0}^{\frac{q}{p}-q} \left|\int_{M}  u  \, dm \right|^{q} + \left( 1+ p(p-1)^{p-1} \right)^{\frac{q}{p}} \left( \int_{M} |u- \bar{u}|^{p} \,dm \right)^{\frac{q}{p}}. \label{sob10}
\end{equation}
Obviously,  we just need to assume that $u \in C^{0}(M)$ and that $\int_{M} u \, dm \neq 0$ for (\ref{sob10}).  Write
$$
u=\bar{u}(1+v),
$$
where $v$ satisfies $\int_{M} v \, dm=0$. It is easy to see that
\beqn
\int_{M} | 1+ v |^{p} \, dm  & = & \int_{ \left\{v \geq 0\right\}} | 1+ v |^{p} \, dm + \int_{\left\{ -1 \leq v <0 \right\} } | 1+ v |^{p} \, dm\\
& & + \int_{\left\{ v<-1 \right\}} | 1+ v |^{p} \, dm .
\eeqn

By $1 < p < 2$, the following holds
\begin{equation*}
\begin{cases}
& (1+x)^{p} \leq 1+px+x^{p}, \ \   x \geq 0 , \\
& (1-x)^{p} \leq 1-px+x^{p}, \ \  0 \leq  x \leq 1 , \\
& (x-1)^{p} \leq x^{p}, \ \ \ \ \ \ \ \ \ \ \ \ \ \   x \geq 1.
\end{cases}
\end{equation*}
Hence
\beqn
\int_{M} | 1+ v |^{p} \, dm & \leq & \int_{ \left\{v \geq 0\right\}} \ \, dm + p\int_{\left\{ v \geq 0 \right\} } v \, dm + \int_{\left\{ v \geq 0 \right\}} v^{p} \, dm + \int_{ \left\{-1 \leq v < 0\right\}} \ \, dm\\
& & + p\int_{\left\{ -1 \leq v < 0 \right\} } v \, dm + \int_{\left\{ -1 \leq v < 0 \right\} } |v|^{p} \, dm+ \int_{\left\{ v < -1 \right\} } |v|^{p} \, dm.
\eeqn
Then we have
\beqn
\int_{M} | 1+ v |^{p} \, dm & \leq & m\left(\{v \geq -1\}\right) + \int_{M} |v|^{p} \, dm + p\int_{\left\{ v \geq -1 \right\}} v \, dm \\
& = & m\left(\{v \geq -1 \}\right) + \int_{M} |v|^{p} \, dm+ p\int_{M}  v \, dm - p\int_{\left\{ v < -1 \right\}}v \, dm\\
& = & m\left(\{v \geq -1\}\right) + \int_{M} |v|^{p} \, dm+ \int_{\left\{ v < -1 \right\} } |v| \, dm.
\eeqn
By H\"{o}lder inequality, we obtain
\begin{equation}{\label{sob11}}
\int_{M} |1+v|^{p} \, dm \leq  m\left(\{v \geq -1\}\right) + \int_{M} |v|^{p} \, dm + p \cdot m ( \left\{ v < -1 \right\} )^{\frac{p-1}{p}} \left(\int_{M} |v|^{p} \, dm \right)^{\frac{1}{p}}.
\end{equation}

Now, Let $\bar{m}_{0}:= m\left(\{v \geq -1\}\right)$ and for any $t \in [0,\ m_{0}]$, let
\[
f(t)= t+\| v\|^{p}_{L^{p}}+p\| v\|_{L^{p}}(m_{0}-t)^{\frac{p-1}{p}}.
\]
It is easy to check that $f(0)= \| v\|^{p}_{L^{p}}+p\| v\|_{L^{p}}m_{0}^{\frac{p-1}{p}}$ and
\beq
f' (t)&=& 1-(p-1)\| v\|_{L^{p}}(m_{0}-t)^{-\frac{1}{p}}, \\
f''(t)&=& -\frac{p-1}{p}\| v\|_{L^{p}}(m_{0}-t)^{-\frac{1}{p}-1}.\label{sob12}
\eeq
Therefore, we have $\lim\limits_{t \rightarrow m_{0}^{-}} f'(t)=-\infty$ and $f'$ is monotone decreasing  on $[0,\ m_{0}]$.

Assume that $f'(0) <0$.  Then
\begin{equation}{\label{sob13}}
m_{0}^{\frac{1}{p}} < (p-1)\| v\|_{L^{p}}
\end{equation}
and $f$ is monotone decreasing  on $[0,\ m_{0}]$. Thus we have $f(\bar{m}_{0})\leq f(0)$, that is
\[
 \bar{m}_{0}+ \| v\|^{p}_{L^{p}}+ p\| v\|_{L^{p}}(m_{0}- \bar{m}_{0})^{\frac{p-1}{p}} \leq   \| v\|^{p}_{L^{p}}+ p\| v\|_{L^{p}}m_{0}^{\frac{p-1}{p}},
\]
from which and by (\ref{sob11}), (\ref{sob13}), we get
\begin{equation}{\label{sob14}}
\int_{M} |1+v|^{p} \, dm \leq  \left(1+p(p-1)^{p-1} \right) \| v\|^{p}_{L^{p}}.
\end{equation}

Assume that $f'(0) \geq 0$. Then $m_{0}^{\frac{1}{p}} \geq (p-1)\| v\|_{L^{p}}$.  Let $t_{c}:= m_{0}-(p-1)^{p} \| v\|^{p}_{L^{p}}$. It is easy to check that $f'(t_{c})=0$ and $t_{c}$ is the maximal value point of $f(t)$ in $[0, {m}_{0}]$.
Hence we have $f(\bar{m}_{0}) \leq f(t_{c})$. By (\ref{sob11}),  this implies to
\begin{align}{\label{sob15}}
\int_{M} |1+v|^{p} \, dm  & \leq  m_{0} +  \| v\|^{p}_{L^{p}} + p(p-1)^{p-1} \| v\|^{p}_{L^{p}} \nonumber \\
& = m_{0} + \left( 1+p(p-1)^{p-1} \right) \| v\|^{p}_{L^{p}}.
\end{align}

In sum, by ({\ref{sob14}}) and ({\ref{sob15}}), we have
\begin{equation}
\int_{M} |1+v|^{p} \, dm \leq  m_{0} + \left(1+p(p-1)^{p-1} \right) \int_{M} | v|^{p} \, dm. \label{sob16}
\end{equation}

Since $\frac{q}{p} \leq 1$, we can obtain
$$
\left(\int_{M} |1+v|^{p} \, dm\right)^{\frac{q}{p}} \leq  m_{0}^{\frac{q}{p}} + \left(1+p(p-1)^{p-1} \right)^{\frac{q}{p}} \left(\int_{M} | v|^{p} \, dm \right)^{\frac{q}{p}}.
$$
Multiplying this inequality by $|\bar{u}|^{q}$, we get the following
\begin{equation}{\label{sob17}}
\left(\int_{M} |u|^{p} \, dm\right)^{\frac{q}{p}} \leq  m_{0}^{\frac{q}{p}-q}\left|\int_{M} u \, dm \right|^{q} + \left(1+p(p-1)^{p-1} \right)^{\frac{q}{p}} \left(\int_{M} | u- \bar{u} |^{p} \, dm \right)^{\frac{q}{p}}.
\end{equation}
It is just (\ref{sob10}).

Now, from (\ref{sob17}) and by (\ref{sob9}) and (\ref{Holder}), for any $u \in W^{1,q}(M)$, we have
$$
\left(\int_{M} |u|^{p} \, dm\right)^{\frac{q}{p}} \leq  m_{0}^{\frac{q}{p}-1}\int_{M} |u |^{q} \, dm  + \left(1+p(p-1)^{p-1} \right)^{\frac{q}{p}}B^{q} \int_{M} [F^{*}(du)] ^{q} \, dm.
$$
Because $\frac{q}{p}-1=-\frac{q}{\nu}$, we have
$$
\left(\int_{M} |u|^{p} \, dm\right)^{\frac{q}{p}} \leq  m_{0}^{-\frac{q}{\nu}}\int_{M} |u |^{q} \, dm  + A \int_{M} [F^{*}(du)] ^{q} \, dm
$$
where $A:= \left(1+p(p-1)^{p-1} \right)^{\frac{q}{p}}B^{q}$. It is just (\ref{sob6}) in the case that $1 < p <2$. This completes the proof of Theorem \ref{sob5}. \qed

\section{The Nash inequality}\label{nash}
In this section, we will prove the Nash inequality on forward complete Finsler manifold by the $p$-Poincar\'{e} inequality and volume comparison.
\begin{thm}{\label{nashs1}}
Let $(M, F, m)$ be an $n$-dimensional forward complete Finsler manifold with finite reversibility $\Lambda$. Assume that ${\rm Ric}_{\infty} \geq -K$ for some $K \geq 0$, $d= {\rm Diam}(M)< \infty$ and $m_{0}=m(M)>0$. Let $p \geq 1$. Then, for any $u\in W^{1,p}(M)$, there exist positive constants $D=D(n, K, \vartheta, \Lambda, d)$ such that
\begin{equation}
\|u\|_{L^{p}}^{2+\frac{2}{n+1}} \leq D(n, K, \vartheta, \Lambda, d)m_{0}^{\frac{2(1-p)}{(n+1)p}}\left(\|F(\nabla u)\|_{L^{p}}^{2}+\|u\|_{L^{p}}^{2}\right)\|u\|_{L^{1}}^{\frac{2}{n+1}}. \label{nash2}
\end{equation}
\end{thm}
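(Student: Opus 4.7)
The plan is to mimic the classical Nash argument on Finsler spaces: decompose $u=(u-\bar{u}_r)+\bar{u}_r$, control the oscillation $u-\bar{u}_r$ via the global $p$-Poincar\'e inequality (Theorem~\ref{Poincare-ineq}), estimate the localized average $\bar{u}_r$ by a volume lower bound coming from Lemma~\ref{volume}, and finally optimise the resulting estimate in the radius $r$. The degenerate case of the optimisation is what forces the $\|u\|_{L^p}^2$ term into the right-hand side of (\ref{nash2}).

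Concretely, I would fix a scale $R=\Lambda d+1$ and apply Theorem~\ref{Poincare-ineq} to get, for every $r\in(0,R)$ and every $x\in M$,
\[
\|u-\bar{u}_r\|_{L^p}\le C_1\, r\,\|F(\nabla u)\|_{L^p},\qquad \bar{u}_r(x)=\frac{1}{m(B_r(x))}\int_{B_r(x)}u\,dm,
\]
with $C_1=C_1(n,K,\vartheta,\Lambda,d)$. Because $\mathrm{Diam}(M)=d$ and $F$ has reversibility $\Lambda$, every $y\in M$ satisfies $d_F(x,y)\le\Lambda d_F(y,x)\le\Lambda d<R$, so $B_R^{+}(x)\supseteq M$ and $m(B_R^{+}(x))=m_0$. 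Lemma~\ref{volume} with $r_1=r$ and $r_2=R$ then yields the uniform lower bound
\[
m(B_r(x))\ge c_V\, m_0\, r^{n+1},\qquad c_V=c_V(n,K,\vartheta,\Lambda,d).
\]
Combined with the trivial pointwise bound $|\bar{u}_r(x)|\le m(B_r(x))^{-1}\|u\|_{L^1}$, this gives $\|\bar{u}_r\|_{L^p}\le m_0^{1/p}\|\bar{u}_r\|_{L^\infty}\le C_2\, m_0^{(1-p)/p}\, r^{-(n+1)}\,\|u\|_{L^1}$, so by the triangle inequality
\[
\|u\|_{L^p}\le C_1\, r\,\|F(\nabla u)\|_{L^p}+C_2\, m_0^{(1-p)/p}\, r^{-(n+1)}\,\|u\|_{L^1},\qquad r\in(0,R).
\]

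The final step is to minimise the right-hand side in $r$. Balancing the two terms suggests the critical value $r_\star^{n+2}\sim m_0^{(1-p)/p}\|u\|_{L^1}/\|F(\nabla u)\|_{L^p}$. When $r_\star\le R$ (the generic case), taking $r=r_\star$ produces
\[
\|u\|_{L^p}^{n+2}\le C\,\|F(\nabla u)\|_{L^p}^{n+1}\, m_0^{(1-p)/p}\,\|u\|_{L^1}.
\]
The main obstacle is the degenerate case $r_\star>R$, which occurs precisely when $\|F(\nabla u)\|_{L^p}$ is extremely small compared with $m_0^{(1-p)/p}\|u\|_{L^1}$. There one is forced to take $r$ near $R$, yielding only the weaker bound $\|u\|_{L^p}\le C_3\, m_0^{(1-p)/p}\|u\|_{L^1}$, whence $\|u\|_{L^p}^{n+2}\le C_3\,\|u\|_{L^p}^{n+1}\cdot m_0^{(1-p)/p}\|u\|_{L^1}$. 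Merging the two cases gives
\[
\|u\|_{L^p}^{n+2}\le C'\,\bigl(\|F(\nabla u)\|_{L^p}^{n+1}+\|u\|_{L^p}^{n+1}\bigr)\, m_0^{(1-p)/p}\,\|u\|_{L^1},
\]
and the elementary inequality $a^{n+1}+b^{n+1}\le 2(a^2+b^2)^{(n+1)/2}$, followed by taking the $2/(n+1)$-th power, converts this into (\ref{nash2}) with $D=(2C')^{2/(n+1)}$.
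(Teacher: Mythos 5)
Your proof is correct and follows the same skeleton as the paper's: Minkowski decomposition $u=(u-\bar{u}_r)+\bar{u}_r$, the global $p$-Poincar\'e inequality for the oscillation term, the volume comparison for $\|\bar{u}_r\|_{L^p}$, and an optimisation in $r$. The one place where you diverge from the paper is in how the $\|u\|_{L^p}^2$ term is made to appear on the right-hand side. The paper's proof simply enlarges $Cr\|F(\nabla u)\|_{L^p}$ to $Cr(\|F(\nabla u)\|_{L^p}+\|u\|_{L^p})$ before minimising $g(r)=H_1 r+H_2 r^{-(n+1)}$ and then plugs the unconstrained minimiser $r_{\min}$ into $g$; the inclusion of $\|u\|_{L^p}$ in $H_1$ is what keeps $r_{\min}$ from blowing up when $\|F(\nabla u)\|_{L^p}$ is small, although the paper does not explicitly verify that $r_{\min}$ stays in the admissible range. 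You instead keep $H_1\sim\|F(\nabla u)\|_{L^p}$, optimise, and handle the degenerate regime $r_\star>R$ by a separate case analysis in which taking $r$ near $R$ yields $\|u\|_{L^p}\lesssim m_0^{(1-p)/p}\|u\|_{L^1}$, and the $\|u\|_{L^p}^{n+1}$ factor is supplied afterwards. Both routes land on $\|u\|_{L^p}^{n+2}\lesssim(\|F(\nabla u)\|_{L^p}^{n+1}+\|u\|_{L^p}^{n+1})\,m_0^{(1-p)/p}\|u\|_{L^1}$, and the elementary estimate $a^{n+1}+b^{n+1}\le 2(a^2+b^2)^{(n+1)/2}$ followed by the $\tfrac{2}{n+1}$-th power gives exactly (\ref{nash2}). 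Your explicit case split is arguably cleaner in justifying why the optimiser can be used; the paper's approach is shorter but leaves the constraint $r_{\min}<d$ implicit. A small inefficiency in your write-up: since ${\rm Diam}(M)=d$ already gives $d_F(x,y)\le d$ for all $x,y$, the detour through reversibility to show $d_F(x,y)\le\Lambda d$ is superfluous (harmless, but $R=d$ would do).
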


\begin{proof}
It follows from Minkowiski inequality that, for any $u \in W^{1,p}(M)$,
\beq
\|u\|_{L^{p}} & \leq & \|u-\bar{u}_{r}\|_{L^{p}}+\|\bar{u}_{r}\|_{L^{p}}\nonumber\\
& := & {\rm I}+{\rm II}. \label{nash2}
\eeq
By  Theorem \ref{Poincare-ineq}, we have
\begin{equation}{\label{nash3}}
{\rm I} =\|u-\bar{u}_{r}\|_{L^{p}} \leq Cr\|F(\nabla u)\|_{L^{p}}.
\end{equation}
On the other hand,  for any $r \in (0, d)$, we have
\begin{equation}{\label{nash4}}
|\bar{u}_{r}(x)|  \leq \frac{1}{m(B_{r}(x))}\int_{M} |u| \, dm.
\end{equation}
By volume comparison (\ref{doubvol}), we can get
\be
\frac{m(B_{d}(x))}{m(B_{r}(x))}=\frac{m_{0}}{m(B_{r}(x))}\leq \left(\frac{d}{r}\right)^{n+1}e^{\frac{K+ \vartheta^{2}}{6}d^2}. \label{nashh}
\ee
Plugging (\ref{nashh}) into (\ref{nash4}) yields
$$
|\bar{u}_{r}(x)|  \leq \left(\frac{d}{r}\right)^{n+1}e^{\frac{K+ \vartheta^{2}}{6}d^2} m_{0}^{-1}\int_{M} |u| \, dm.
$$
Hence,
$$
\|\bar{u}_{r}\|_{\infty} \leq \left(\frac{d}{r}\right)^{n+1}e^{\frac{K+ \vartheta^{2}}{6}d^2} m_{0}^{-1} \|u\|_{L^{1}}.
$$
Consequently, it follows that
\beq
{\rm II} =\|\bar{u}_{r}\|_{L^{p}} & \leq &  \left(\|\bar{u}_{r}\|_{\infty}^{p} m_{0}\right)^{\frac{1}{p}}\nonumber\\
& \leq & \left(\frac{d}{r}\right)^{n+1}e^{\frac{K+ \vartheta^{2}}{6}d^2} m_{0}^{\frac{1}{p}-1} \|u\|_{L^{1}}. \label{nash5}
\eeq
Substituting (\ref{nash3}) and (\ref{nash5}) into (\ref{nash2}), we can get
\beq
\|u\|_{L^{p}} & \leq & \|u-\bar{u}_{r}\|_{L^{p}}+\|\bar{u}_{r}\|_{L^{p}}\nonumber\\
& \leq & Cr\|F(\nabla u)\|_{L^{p}}+ \left(\frac{d}{r}\right)^{n+1}e^{\frac{K+ \vartheta^{2}}{6}d^2} m_{0}^{\frac{1}{p}-1} \|u\|_{L^{1}}\nonumber\\
& \leq & Cr(\|F(\nabla u)\|_{L^{p}}+\|u\|_{L^{p}})+ \left(\frac{d}{r}\right)^{n+1}e^{\frac{K+ \vartheta^{2}}{6}d^2} m_{0}^{\frac{1}{p}-1} \|u\|_{L^{1}}.\label{nash6}
\eeq
Notice that $\|u\|_{L^{p}}$ in the left hand side of (\ref{nash6}) does not rely on $r$. Let
$$
g(r):=Cr(\|F(\nabla u)\|_{L^{p}}+\|u\|_{L^{p}})+ \left(\frac{d}{r}\right)^{n+1}e^{\frac{K+ \vartheta^{2}}{6}d^2} m_{0}^{\frac{1}{p}-1} \|u\|_{L^{1}}.
$$
It is easy to see that
\beqn
g'(r) &= & H_{1}-\theta H_{2}r^{-(\theta+1)}, \\
g''(r)&= &\theta(\theta+1)H_{2}r^{-(\theta+2)},
\eeqn
where $H_{1}= C(\|F(\nabla u)\|_{L^{p}}+\|u\|_{L^{p}})$, $H_{2}=d^{\theta}e^{\frac{K+ \vartheta^{2}}{6}d^2} m_{0}^{\frac{1}{p}-1} \|u\|_{L^{1}}$ and $\theta = n+1$. Therefore, we can see that $g'(r)$ is increasing in $(0, d)$ and $g'(r_{min})=0$, where $r_{min}=(\frac{H_{1}}{\theta H_{2}})^{-\frac{1}{\theta +1}}$. Plugging $r_{min}$ into the right hand side of (\ref{nash6}), we can conclude the following
\[
\|u\|_{L^{p}}^{2+\frac{2}{n+1}} \leq D(n, K, \vartheta, \Lambda, d)m_{0}^{\frac{2(1-p)}{(n+1)p}}(\|F(\nabla u)\|_{L^{p}}^{2}+\|u\|_{L^{p}}^{2})\|u\|_{L^{1}}^{\frac{2}{n+1}},
\]
where $D(n, K, \vartheta, \Lambda, d)>0$ is a constant. This completes the proof of Theorem \ref{nashs1} (that is, Theorem \ref{nashs}).
\end{proof}

\vskip 5mm

\end{document}